\newcommand{\As}{\tilde A_{11}^s}
\newcommand{\sym}{\mathrm{Sym}^-_J}
\newcommand{\mau}{\Theta}
\newcommand{\mad}{\Upsilon}
\newcommand{\lau}{\kappa}
\newcommand{\lad}{\cak}
\newcommand{\barm}{\bar{m}}
\newcommand{\F}{\mathscr{F}}
\newcommand{\M}{\mathscr{M}}
\renewcommand{\L}{\mathscr{L}}
\newcommand{\GL}{GL}
\newcommand{\LL}{\Lambda}
\newcommand{\ooverrightarrow}[1]{\LL(#1)}
\newcommand{\ps}[2]{\la #1, #2\ra}
\DeclareMathOperator{\diag}{diag}
\DeclareMathOperator{\SO}{SO}
\DeclareMathOperator{\rank}{rank}
\newcommand{\T}[1]{{#1}^\mathrm{T}}
\newcommand{\rk}{\rank}
\newcommand{\spann}{\mathrm{span}}
\newcommand{\add}[1]{\textcolor{black}{#1}}
\newcommand{\@syst}[1]{%
\[ 
\left\{
\begin{aligned}
#1
\end{aligned}
\right.
\]
}
\newenvironment{syst}{\collect@body\@syst}{\global\@ignoretrue}
\newcommand{\@systn}[1]{%
\begin{equation}
\left\{
\begin{aligned}
#1
\end{aligned}
\right.
\end{equation}
}
\newtheorem{thm}{Theorem}[section]
\newtheorem{lemma}[thm]{Lemma}
\newtheorem{cor}[thm]{Corollary}
\newtheorem{prop}[thm]{Proposition}
\newtheorem{rmk}[thm]{Remark}
\newtheorem{deff}[thm]{Definition}
\newcommand{\beq}{\begin{equation}} 
\newcommand{\eeq}{\end{equation}}
\newcommand{\be}{\begin{equation}} 
\newcommand{\ee}{\end{equation}}
\newcommand{\bea}{\begin{eqnarray}} 
\newcommand{\eea}{\end{eqnarray}}
\newcommand{\brs}{\begin{eqnarray*}}
\newcommand{\ers}{\end{eqnarray*}}
\newcommand{\ba}{\begin{array}} 
\newcommand{\ea}{\end{array}}
\newcommand{\br}{\begin{eqnarray}}
\newcommand{\er}{\end{eqnarray}}
\newcommand{\lp}{\left(}
\newcommand{\rp}{\right)}
\newcommand{\la}{\left\langle}
\newcommand{\ra}{\right\rangle}
\renewcommand{\r}[1]{(\ref{#1})}
\def\EOP{\ \hfill $\Box$}
\newcommand{\prf}{{\it Proof.}}
\newcommand{\R}{\mathbf{R}}
\newcommand{\RP}{\R\mathbf{P}}
\newcommand{\N}{\mathbf{N}}
\newcommand{\Id}{\mathrm{Id}}
\newcommand{\Int}{\mathrm{Int}}
\newcommand{\Cl}{\mathrm{Clos}}
\def\mt{\mapsto}
\newcommand{\om}{\omega} 
\newcommand{\Om}{\Omega}
\newcommand{\lb}{\lambda}
\def\eps{\varepsilon}
\def\fhi{\varphi}
\def\al{\alpha}
\def\th{\theta}
\newcommand{\caa}{{\cal {A}}}
\newcommand{\caA}{{\cal {A}}}\newcommand{\A}{{\cal {A}}}
\newcommand{\caS}{{\cal {S}}}
\newcommand{\cak}{{\cal {K}}}
\newcommand{\CC}{{\cal {C}}}
\newcommand{\0}{
0}
\title{\bf \Large 
Controllability properties of a class of systems modeling 
 swimming microscopic organisms
}
\author{        Mario Sigalotti and Jean-Claude Vivalda
}
\date{}
\begin{document}

\maketitle

\today

\begin{abstract}
We consider a finite-dimensional model for the motion of   
microscopic organisms whose propulsion 
exploits
the action of a layer of {\it cilia} covering its surface.  
The model couples 
Newton's laws driving the organism, 
considered as 
a rigid body, with
Stokes equations governing the surrounding fluid.
The action of the
{\it cilia} is described by a set of controlled 
velocity fields  on the surface of the organism. 
The first contribution of the paper is the proof 
that  such a system
is generically controllable 
when the space of controlled velocity fields is at least three-dimensional.  We also provide 
a complete characterization of controllable systems 
in the case in which
the organism has a spherical shape. Finally, we 
offer  a complete picture of controllable and non-controllable systems under the additional hypothesis 
that
the organism and the fluid have 
densities  of the same order of magnitude. 
\end{abstract}

\section{Introduction}\label{intro}
The mathematical 
description of the motion of swimming organisms 
and of the mechanisms of their propulsion is a 
challenging and 
wide field of research. 
This paper aims at giving a contribution 
to such a domain in a comparatively narrow direction. 

The object of our study are microscopic organisms.
The most prominent aspect 
of the models describing 
their motion 
is the very high level of viscosity (\cite{berg,purcell,taylor}). 
The mechanism of their 
propulsion
depends heavily on the different species and 
admits a great variety of models (see, for instance,
\cite{desimone,childress,galdi,lighthill}
and references therein).

Among microscopic organisms we will further restrict our attention to
the class of {\it ciliata}, whose propulsion is determined
by {\it cilia},
hair-like organelles covering their surface 
and whose size is very small compared with that of the organism
(see \cite{blake,brennen,galdi}).
Recently San Mart\'\i n, Takahashi and Tucsnak \cite{nancy}
 proposed 
a control theory approach to the description of 
ciliata. Following the former literature (and in particular \cite{galdi,keller}) they assume that the organism is 
a rigid body and that the motion of the
cilia is described by a set of controlled 
velocity fields  on a surface enclosing the layer of cilia. 
They are therefore faced to an infinite-dimensional  control system
coupling 
Navier--Stokes equations, which describe the motion of 
the fluid, and the equations for the 
rigid body arising form Newton's laws. 
The hypothesis of high viscosity (ie, of low Raynolds number) mentioned above 
is exploited to reduce such a control system to a finite-dimensional version of it. The state-space is given by the coordinates of the center of mass of the organism, its orientation and its angular and linear velocities. 

Summarizing, San Mart\'\i n, Takahashi and Tucsnak single out a $12$-dimensional nonlinear control system which is affine in the controls. The control system depends on a number of parameters
which are explicitly derived as functions of the shape of the organism, of its mass distribution, and of the 
surface 
velocity fields describing the 
motion  of the cilia. The authors study the control properties of 
the system proving that 
it is generically controllable when the number of 
controlled vector fields is at least six (see Section~\ref{gen} for details). They also provide some example of controllable 
systems in the case in which the organism has a spherical shape. In this special case they also 
study the controllability properties 
of the linearization of the system at its rest position.

Our scope is to 
enhance 
the results of \cite{nancy} basically in three directions: firstly we want to give sharper results on the generic controllability of the system; secondly we want to obtain a complete characterization of controllable systems 
in the case in which the organism is spherical; thirdly we want to 
provide a complete picture of controllable and non-controllable systems in the simplified  situation which --following again \cite{nancy}-- 
comes into effect in the case in which the organism and the fluid have 
densities  of the same order of magnitude. 
The overall aim is to provide a better understanding of
the dependence of the controllability properties of the model proposed in \cite{nancy} on the physical quantities characterizing it. 
The 
hard aspect
of the model is indeed 
the 
difficulty
to obtain 
reliable expressions for the
surface velocity fields describing the 
motions of the cilia. 
In this regard, it is 
important to
provide as precise as possible
parametric analysis of the controllability properties of the system.

The paper is organized as follows. Section~\ref{nots} introduces the control theoretical language adopted throughout the paper. 
Section~\ref{defs} presents the control system obtained in \cite{nancy}
and recalls the relations between the physical quantities characterizing the organism and the parameters appearing in the model. Section~\ref{abstr} is technical and contains an abstract controllability result used in the later sections.
Section~\ref{gen} is devoted to the analysis of the genericity of the
controllability of the system introduced in Section~\ref{defs}. 
Genericity is initially formulated in terms of the parameters 
appearing in the finite-dimensional system and then 
expressed in terms of the physical objects they depend on (Section~\ref{phis}).
The main result is that 
the system is generically controllable if
the number of 
controlled vector fields is at least 
three and that one controlled vector field suffices 
for its generic accessibility (Theorem~\ref{gnrc}). 
Section~\ref{tondo} analyzes the case in which the organism is spherical and provides a complete characterization of the sets of parameters that make the system controllable
(Theorem~\ref{main}). Finally,
Section~\ref{sme} establishes a characterization of controllable systems when the organism can be assumed to have  the same density as the fluid (Theorem~\ref{same}). The result follows from a further reduction of the system 
that transforms it into a six-dimensional one. 
The 
reduction procedure, suggested in \cite{nancy}, is carried out in full 
details.

\section{Notations and definitions}\label{nots}

Let us first introduce some notations: in the following $\Id$ denotes the $3\times3$ identity matrix, 
$\0_{m\times n}$ is the zero $m\times n$ matrix, $\0_m=\0_{m\times 1}$ is the zero vector in $\R^m$. We 
denote  by $\M_{m\times n}$ the set of $m\times n$ real matrices. 
The interior of a set $W$ is denoted by $\Int(W)$ and its closure by 
$\Cl(W)$.
For simplicity of notation, when no confusion is possible, row and column vectors are identified. 
The canonical orthonormal  basis of $\R^3$ is denoted by $\{e_1,e_2,e_3\}$.
For every $\om\in\R^3$ we denote by  $S(\omega)$ 
the skew symmetric matrix such that $S(\omega)z = \omega\times z$ for every $z\in\R^3$, that is, 
\[
S(\omega) = \begin{pmatrix}
0 & -\omega_3 & \omega_2\\
\omega_3 & 0 & -\omega_1\\
-\omega_2 & \omega_1 & 0
\end{pmatrix}.
\]

In what follows we deal with control systems of the type
\be\label{CS}
\dot q=f(q,u),\ \ \ \ q\in M,\ \ u\in \R^m,
\ee
where $M$ is a smooth ($\CC^\infty$) manifold and $f:M\times \R^m\to T M$ is smooth with respect to both variables. Moreover, 
for every $T\geq0$ 
the {\it endpoint mapping} $E_T$ which associates to an initial condition
$q_0\in M$ and a control function $u\in L^\infty([0,T],\R^m)$ the 
final point of the corresponding trajectory of \r{CS} 
is well defined and continuous on $M\times  L^\infty([0,T],\R^m)$, where
the $L^1$ norm is chosen on  $L^\infty([0,T],\R^m)$.
(For these and finer properties of the endpoint mapping see, for instance, 
\cite{book2}.)

Let $\F=\{f(\cdot,u)\mid u\in\R^m\}$ be the family of vector fields characterizing \r{CS}. 
For every $X=f(\cdot,u)\in\mathscr F$ we denote by $e^{t X}$ the one-parameter group generated by $X$, that is, $e^{t X}(q)=E_t(q,u 1_{[0,t]})$. 
The {\it  attainable set from $q\in M$} is the set
$$\A(q,\F)=\cup_{T\geq0}E_T(q,L^\infty([0,T],\R^m)).$$ 
When no confusion is possible we write $\A(q)=\A(q,\F)$.


We say that \r{CS} (or, equivalently, $\F$) 
is {\it 
controllable} 
 if for every $q\in M$
the set $\A(q)$ is equal to  $M$.  
We say that \r{CS} is {\it approximatively controllable}
if $\A(q)$ is  dense in $M$.

Give two vector fields $X$ and $Y$ on $M$, the {\it Lie bracket} between $X$ and $Y$ is defined, in a local system of coordinates,
by the relation  
\[
{[X,Y]}=(DY)\,X-(DX)\,Y
\]
{where $DX$ denotes the derivative of $X$.}
Denote by $\L$ the Lie algebra generated by $\F$.
System \r{CS} is {\it Lie-bracket generating} if for every $q\in M$ the set $\L(q)=\{V(q)\mid V\in\L\}$ is equal to $T_q M$.

\section{The dynamics of swimming microscopic organisms}\label{defs} \label{dynamics}

In  \cite{nancy} the authors describe 
the swimming of a microscopic organism immersed in an infinite volume of fluid 
by coupling the dynamics of a rigid body (representing the organism) with
the Navier--Stokes equations 
describing the behavior of the fluid outside the body. 
Exploiting the hypothesis of very low Reynolds number, they derive a simplified version of the coupled system. 
The 
system 
obtained in this way 
is finite-dimensional and nonlinear; more precisely, it has the following expression:
\begin{eqnarray}
\dot z&=&A z+E(z)+B u\,,\label{vz}\\
\dot \zeta&=& R\xi\label{zita}\,,\\
\dot R&=&R S(\om)\,,\label{R}
\end{eqnarray}
where $z=(\xi,\om)\in\R^3\times\R^3$, $\zeta\in\R^3$,  $R\in SO(3)$, and 
\[E(z)=\lp \ba{c}\xi\times \omega\\ J^{-1}((J\om)\times \om)\ea\rp.\]
Here $J\in \M_{3\times 3}$ denotes the inertia matrix of the rigid body representing the organism.
Recall that $J$ is symmetric and positive definite.  
The  matrix $A\in \M_{6\times 6}$ is a function of 
the rigid body and the viscosity of the fluid and its expression is given at the end of the section.

The control function
$u$ takes values in $\R^m$, $m\geq1$, and $B$ is a 6-by-$m$ matrix, which we write as
\[B=\lp\ba{c}B_1\\B_2\ea\rp\]
where both $B_1$ and $B_2$ belong to $\M_{3\times m}$.

An important feature of the system above is that 
\r{vz} is a well-defined control system on $\R^6$, since it does not depend on 
$\zeta$ nor $R$. 

Denote by $X_0$ the vector field on $\R^6\times\R^3\times\SO(3)$ which is the drift of the control system \r{vz}--\r{R}, that is, 
\[
X_0(z,\zeta,R)=\lp\begin{array}{c}
A z +E(z)\\
R\xi\\
RS(\omega)
\end{array}\rp.
\]
Let moreover $X_1,\dots,X_m$ be the controlled vector fields, 
so
that
system
\r{vz}--\r{R} can be written as
\[\dot q=X_0(q)+\sum_{i=1}^m u_i X_i(q),\ \ \ \ \ \ \ q\in \R^9\times SO(3).\]
Define $\L$ as the Lie algebra generated by $X_0,X_1,\dots,X_m$.

We say that a vector field $X$ on $\R^9\times SO(3)$ 
is a {\it constant vector field} 
if there exist  $v_1,v_2\in\R^3$ such that 
$X:(\xi,\om,\zeta,R)\mapsto (v_1,v_2,\0_6)$ and we write
$X=(v_1,v_2,\0_6)$. 
 Clearly, $X_1,\dots,X_m$ are constant vector fields. 
Notice that if $V= (v_1,v_2,\0_6)$ and $W= (w_1,w_2,\0_6)$ are constant vector fields, then
\be\label{CVF}
[[X_0,V],W]= \lp\ba{c}v_2\times w_1+w_2\times v_1\\ J^{-1}\lp Jw_2\times v_2+J v_2\times w_2\rp\\ \0_6\ea\rp 
\ee
is a constant vector field as well. 

\begin{rmk}\label{total}
For every constant vector field $V={(v_1,v_2,0_6)}$, the bracket $[X_0,V]$ is of the form 
\[ {[X_0,V]}(z,\zeta,R)={(G(z,V),-R v_1,-R S(v_2))}\] 
for some smooth map $G$ with values in $\R^6$. Therefore,
 if the constant vector fields contained in $\L$ span $\R^6\times \{0_6\}$, then
system  \r{vz}--\r{R}  is Lie-bracket generating. 
\end{rmk}

We conclude the section by recalling the analytic characterization of the coefficients appearing in $A$, $B$ and $J$. Let $\Om\subset\R^3$ be an open bounded 
connected
set whose boundary is of class $\CC^2$, representing the (rigid) shape of the organism. Then the coefficients $J_{ij}$ of $J$ are given by
\be\label{e_J}
J_{ij}=\int_{\Om} \delta(x) (e_i\times (x-\bar\xi))\cdot (e_j\times (x-\bar\xi)) d x,  
\ee
where $\bar \xi$ denotes the mass center of $\Om$ and $\delta:\Om\to(0,\infty)$ is the (possibly non-constant) density function. 

Denote by $F$ the set $\R^3\smallsetminus \Cl(\Om)$ and, for $l\in\N$, $1\leq q\leq\infty$, let
$$D^{l,q}(F)=\{\fhi\in L^1_{\mathrm{loc}}\mid \partial^\alpha \fhi\in L^q(F)\mbox{ for all }\alpha\in \N^3,\;|\al|=l\}.$$
 According to \cite[Lemma 3.1]{nancy}, for every $i\in\{1,2,3\}$ there exists a unique solution $(h^{(i)},p^{(i)})$ of
 \[
 \left\{\ba{rcll}
 -\Delta h^{(i)}+\nabla p^{(i)}&=&0&\mbox{ on }F\\
 \mathrm{div} h^{(i)}&=&0& \mbox{ on }F\\
 h^{(i)}&=&e_i& \mbox{ on }\partial \Om\\
\lim_{|y|\to\infty}h^{(i)}(y)&=&0&
 \ea\right.
 \]
 and 
 a unique solution $(H^{(i)},P^{(i)})$ of
 \[
 \left\{\ba{rcll}
 -\Delta H^{(i)}+\nabla P^{(i)}&=&0&\mbox{ on }F\\
 \mathrm{div} H^{(i)}&=&0& \mbox{ on }F\\
 H^{(i)}(y)&=&e_i\times y& \mbox{ on }\partial \Om\\
\lim_{|y|\to\infty}H^{(i)}(y)&=&0&
 \ea\right.
 \]
 such that $h^{(i)},H^{(i)}\in L^s(F)\cap D^{1,r}(F)\cap D^{2,\th}(F)\cap \CC^\infty(F)$ and $p^{(i)},P^{(i)}\in L^r(F)\cap D^{1,\th}(F)\cap \CC^\infty(F)$ for $s\in(3,\infty]$, $r\in (3/2,\infty]$ and $\th\in(1,\infty)$.
 
The {\it Cauchy stress} is the tensor $\sigma$ defined by the relation
\be\label{sigma}
\sigma(v,p)=-p\Id+\mu \lp \frac{\partial v_k}{\partial y_l}+\frac{\partial v_l}{\partial y_k}\rp_{1\leq l,k\leq 3}
\ee
where $\mu$ is the Reynolds number of the fluid.
For $i=1,2,3$, define
$$g^{(i)}=\sigma(h^{(i)},p^{(i)})n|_{\partial \Om},\ \ \ G^{(i)}=\sigma(H^{(i)},P^{(i)})n|_{\partial \Om},$$
where $n$ is the unit inner normal to $\partial \Om$. 

Denote by $\mau^1,\mau^2,\mad^1,\mad^2$ the matrices in $\M_{3\times 3}$ defined by
$$\mau^1_{ij}=-\int_{\partial \Om} g_j^{(i)}d s,\ \ \ \mau^2_{ij}=-\int_{\partial \Om}\lp x\times  g^{(i)}\rp_j ds,$$
$$\mad^1_{ij}=-\int_{\partial \Om} G_j^{(i)} ds,\ \ \ \mad^2_{ij}=-\int_{\partial \Om}\lp x\times  G^{(i)}\rp_j ds,$$
where $ds$ is the surface element of $\partial \Om$.  
It turns out (see \cite{low}) that $\mad^1=\T{(\mau^2)}$ and that
\be\label{e_A}
A=\lp\ba{cc}
{\barm}^{-1}\mau^1&{\barm}^{-1}\mau^2\\
{J}^{-1}{\mad^1}&{J}^{-1}\mad^2
\ea\rp,
\ee
where $\barm$ denotes the mass of the organism. 
Moreover, with respect to the inner product in $\R^6$ defined by
\be\label{J_scalar}
\la a,b\ra_J=\barm \sum_{l=1}^3 a_l b_l+\sum_{l,k=1}^3 J_{lk}a_{3+l}b_{3+k},
\ee
$A$ is self-adjoint and negative-definite. 

The matrix $B$ is defined by 
\be\label{e_B}
B=\lp\ba{c}
\barm^{-1}\lau\\J^{-1}\lad
\ea\rp
\ee
where the entries of $\lau$ and $\lad$ are
\[
\lau_{ij}=-\int_{\partial \Om} g^{(i)}\cdot \psi_j ds,\ \ \ \lad_{ij}=-\int_{\partial \Om} G^{(i)}\cdot \psi_j ds,
\]
and $\psi_1,\dots,\psi_m$ are fixed functions in $\CC^2(\partial \Om,\R^3)$, each one associated to a component of the control $u$, that describe the admissible 
propulsive actions of the organism.

\section{ A controllability result in a more general setting
}\label{abstr}

The following proposition
provides sufficient (and necessary) conditions for the 
controllability of systems 
that generalize those introduced in the previous section, in the sense that a more general
structure for the dynamics in the coordinates $z$ is allowed. 

\begin{prop}\label{yup}\label{GENERALE}
Consider the following control system 
\be\label{z}
\dot z=f(z,u),\ \ \ \ z\in\R^6,~~u\in\R^m.
\ee
Then \r{zita},\r{R},\r{z} is controllable if and only if 
it is Lie-bracket generating at $(z,\zeta,R)=(0_9,\Id)$ and 
system~\r{z} is controllable.
\end{prop}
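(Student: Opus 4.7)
Necessity of both conditions is immediate. Controllability of the full system implies controllability of (\ref{z}) by projecting trajectories onto their $z$-coordinates, and it forces the attainable set from $(0_9,\Id)$ to be the whole manifold, hence to have nonempty interior, which (by a standard accessibility argument exploiting the structure of equations (\ref{zita})-(\ref{R})) translates to the Lie-bracket generating condition at that point.

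For sufficiency, the pivotal structural observation is that the Euclidean group $SE(3)=\R^3\rtimes\SO(3)$ acts on the $(\zeta,R)$-coordinates by $(\zeta,R)\mapsto(\zeta_0+R_0\zeta,R_0R)$ as a symmetry of the full dynamics, since the $z$-equation is unaffected and equations (\ref{zita})-(\ref{R}) are left-invariant under this action. This reduces controllability to proving $\A(0_9,\Id)=\R^6\times\R^3\times\SO(3)$, and the same invariance propagates the Lie-bracket generating hypothesis at $(0_9,\Id)$ to every point of $\{0_6\}\times\R^3\times\SO(3)$, and hence by lower semi-continuity of the bracket dimension to an open tubular neighborhood.

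The heart of the argument is to analyze the ``return set'' $\caS=\{g\in SE(3)\mid(0_6,g)\in\A(0_9,\Id)\}$, which by left-invariance is a sub-semigroup of $SE(3)$ containing $\Id$: concatenating a trajectory from $(0,\Id)$ to $(0,g_1)$ with the $g_1$-left-translate of a trajectory from $(0,\Id)$ to $(0,g_2)$ yields a trajectory from $(0,\Id)$ to $(0,g_1g_2)$. By Krener's theorem $\A(0_9,\Id)$ has nonempty interior in the $12$-dimensional ambient space; applying controllability of (\ref{z}) to steer $z$ back to $0$ from points of this open set, combined with left-invariance, yields that $\caS$ has nonempty interior in $SE(3)$. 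Once $\Id$ is shown to lie in the interior of $\caS$, any open neighborhood $U\subseteq\caS$ of $\Id$ contains the symmetric neighborhood $V=U\cap U^{-1}\subseteq\caS$, and $V$ generates $SE(3)$ as a group by connectedness of the latter, so $\caS\supseteq\bigcup_nV^n=SE(3)$. A final concatenation with $z$-controllability then gives $\A(0_9,\Id)=\R^6\times\R^3\times\SO(3)$. The main technical obstacle is precisely to establish that $\Id$ itself lies in the interior of $\caS$: Krener's theorem only produces interior at some point of $\caS$, and transferring this to $\Id$ requires a small-time construction exploiting the Lie-bracket generating hypothesis to exhibit trajectories starting and returning to $z=0$ while sweeping $(\zeta,R)$ through a full neighborhood of $(0,\Id)$.
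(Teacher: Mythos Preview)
Your framework matches the paper's: the left $SE(3)$-invariance, the return semigroup $\caS$, Krener's theorem, and the use of $z$-controllability to obtain nonempty interior of $\caS$ in $SE(3)$. Your endgame---once $\Id\in\Int(\caS)$, take a symmetric open neighborhood $V=U\cap U^{-1}\subseteq\caS$ and use connectedness of $SE(3)$---is correct and in fact tidier than the paper's concluding lines.

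The gap is the step you yourself flag: getting $\Id$ into $\Int(\caS)$. The ``small-time construction'' you invoke does not follow from the hypotheses. The Lie-bracket generating condition, in the presence of a drift and with no assumption that $(0_9,\Id)$ is an equilibrium, does not yield small-time local controllability, so there is no general mechanism producing $z$-return trajectories whose $(\zeta,R)$-endpoints fill a neighborhood of $\Id$. This is not a detail to be filled in later; it is the substance of the paper's proof and occupies most of it. The paper exploits the concrete structure of $SE(3)$ rather than any abstract small-time argument: it first shows that the $SO(3)$-projection of $\Int(\caS)$ is all of $SO(3)$ (any nonempty open subset of $SO(3)$ contains a rotation of finite order and hence generates $SO(3)$ as a semigroup); then, restricting attention to the fiber $\R^3\times\{\Id\}$, it uses rotations by $\pi$ about each axis $p\in\RP^2$ to show that $\Int(\caS)$ meets every line $p\times\{\Id\}$; finally a connectedness argument on the $2$-sphere combined with a commensurability trick forces $(0_3,\Id)\in\Int(\caS)$. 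Without an argument of this kind your outline does not close.
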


\begin{proof}

One direction of the equivalence being obvious, let us 
assume that system~\r{z} is controllable and that 
\r{zita},\r{R},\r{z} is Lie-bracket generating at $(z,\zeta,R)=(0_9,\Id)$.

Notice that 
\be\label{(P)} 
(z',\zeta',R')\in \caA{(z,\zeta,R)}\ \ \ \ \Longrightarrow\ \ \ \  (z',\bar \zeta+\bar R\zeta',\bar R R')\in \caA{(z,\bar \zeta+\bar R\zeta,\bar R R)}
\ee
for every $(\bar \zeta,\bar R)\in\R^3\times SO(3)$, where the letter $\A$ is used here to denote   attainable sets for system \r{zita},\r{R},\r{z}.
Indeed, if $t\mt u(t)$ is an admissible control steering $(z,\zeta,R)$ to $(z',\zeta',R')$ 
and $t\mt (z(t),\zeta(t),R(t))$ 
is the corresponding trajectory, then $t\mt (z(t),\bar \zeta+\bar R\zeta(t),\bar R R(t))$ is an admissible trajectory corresponding to the same control $u$.

Fix $(z_0,\zeta_0,R_0),(z_1,\zeta_1,R_1)\in \R^9\times SO(3)$. Since \r{z} is 
controllable, then there exist $\eta_0,\eta_1\in\R^3$ and 
$Q_0,Q_1\in SO(3)$ such that 
$(\0_6,\eta_0,Q_0)\in \caA{(z_0,\zeta_0,R_0)}$ and 
$(z_1,\zeta_1,{R_1})\in \caA{(\0_6,\eta_1,Q_1)}$. We are left to prove that     $\caA{(\0_6,\eta_0,Q_0)}$ contains $\{\0_6\}\times \R^3\times SO(3)$.

Since \r{zita},\r{R},\r{z} is Lie-bracket generating at
$(z,\zeta,R)=(0_9,\Id)$, then the set
\[ G=\Int(\caA{(0_9,{\Id})})\]
is nonempty, as it follows from Krener's theorem (see, for instance, \cite[Theorem 1, p. 66]{Jur}).
Fix $(z_*,\zeta_*,R_*)\in G$. 
Since (\ref{z}) is 
controllable, there exists a control law $t\mapsto u(t)$, defined on an interval $[0,T]$ and such that the solution of \r{z} with initial condition $z(0)=z_*$ satisfies $z(T)=0_6$. 
The flow corresponding to the control law $u$, evaluated at time $T$, is a diffeomorphism sending $(z_*,\zeta_*,R_*)$ to a point of the form $(0_6,\zeta_{**},R_{**})$. In particular, since the image of $G$ by such diffeomorphism is contained in $G$,
there exists an open nonempty subset $V$ of $\R^3\times SO(3)$ such that $\{0_6\}\times V$ is contained in $G$.

Notice the following 
consequence of \r{(P)}: if $(0_6,\bar \zeta,\bar R)\in \caA{(\0_6,\eta_0,Q_0)}$ and $(0_6,\zeta',R')\in G$, then
\be\label{sum}
(0_6,\bar \zeta+\bar R \zeta',\bar R R')\in \Int(\caA{(\0_6,\eta_0,Q_0)})
\,.
\ee
Indeed, taking $(z,\zeta,R)=(0_9,\Id)$ in \r{(P)}, we have that 
for every $(z'',\zeta'',R'')$ in a neighborhood of $(0_6,\zeta',R')$, the point $(z'',\bar \zeta+\bar R \zeta'',\bar R R'')$ belongs to 
$\mathcal{A}(0_6,\bar\zeta, \bar R)$ and, since $(0_6,\bar\zeta,\bar R)\in\mathcal{A}(0_6,\eta_0,Q_0)$, we   are done.

Notice that for every nonempty open subset $O$ of $SO(3)$ 
\be\label{w_v}
W=\{P_1 P_2 \cdots P_k\,|\;k\in\N, P_1,\dots,P_k\in O\}
\ee
is equal to $SO(3)$.  Indeed, since $O$ contains at least one element $P_0$ of finite order, ie, an axial rotation of angle commensurable with $\pi$, then $\Id$ belongs to the interior of $W$. The completeness of $SO(3)$ 
guarantees that $W=SO(3)$.

Take as $O$ the projection of $V$ on $SO(3)$.
Then for every $R\in SO(3)$
there exists $\zeta\in\R^3$ such that
$(0_6,\zeta,R)$ belongs to $G$; \add{this can be seen by noticing 
that there exist $k$ elements $(0_6,\zeta_1,P_1),\dots,(0_6,\zeta_k,P_k)$ in 
$V$
 such that $P_i\in O$ and $P_1 P_2\cdots P_k=R$ and by 
applying repeatedly \r{sum} in the special case $\eta_0=0_3$ and $Q_0=\Id$.}

As a consequence, without loss of generality, 
$$Q_0=\Id,\ \ \ \mbox{  }\ \ \ Q_1=\Id.$$ 
\add{Indeed, since there 
exists $\zeta_0\in\R^3$ such that $(0_6,\zeta_0,Q_0^{-1})\in G$, 
equation  \r{sum} with $(\bar\zeta,\bar R)=(\eta_0,Q_0)$ shows  that    
$(0_6,\eta_0+Q_0\zeta_0,\Id)\in\caa{(0_6,\eta_0,Q_0)}$; 
similarly, the  existence of $\zeta_1\in\R^3$ such that $(0_6,\zeta_1,Q_1)\in G$ implies that $(0_6,\eta_1,Q_1)\in\caa{(0_6,\eta_1-\zeta_1,\Id)}$.
}

Moreover, for every element $p$ of $\RP^2$, the Grassmannian of  
one-dimensional subspaces of $\R^3$, there exists $\zeta_p\in\R^3$ such that $(0_6,\zeta_p,R_p)$ lies in $G$, where $R_p$ denotes the rotation of angle $\pi$ around the axis $p$.
Applying \r{sum} to $(\bar \zeta,\bar R)=(\zeta',R')=(\zeta_p,R_p)$ in the special case $\eta_0=0_3$, 
we obtain that $(0_6,\zeta_p+R_p \zeta_p,\Id)$ lies in $G$. Notice that $\zeta_p+R_p \zeta_p{=2\ps{\zeta_p}{p}p}$ belongs to the axis $p$. 
Therefore, $G$ intersects $\{0_6\}\times p\times \{\Id\}$
for every 
 $p\in\RP^2$. 

Of special interest will be for us the expression of \r{sum} when $\bar R=R'=\Id$, namely
\be\label{linear_sum}
(0_6,\bar \zeta,\Id)\in \caa{(0_6,\eta_0,\Id)},\  (0_6,\zeta',\Id)\in G\ \ \ \ \Longrightarrow\ \ \ \ (0_6,\bar \zeta+\zeta', \Id)\in \Int(\caa{(0_6,\eta_0,\Id)})\,.
\ee

Let
\[\caS^2_\pm=\{v\in \caS^2\,|\;\{0_6\}\times\R_{\geq0} (\pm v)\times \{\Id\}\cap
G\ne\emptyset\}\]
{where $\caS^2$ denotes the unit sphere in $\R^3$.}
As we remarked above, $\caS^2_+$ and $\caS^2_-$ cover $\caS^2$ and, by construction, 
they are open.
Moreover, both are 
nonempty, since $v\in\caS^2_\pm$ implies that $-v\in\caS^2_\mp$.
 Since $\caS^2$ is connected we have
 $\caS^2_+\cap\caS^2_-\ne\emptyset$, ie, there exist $v\in\R^3$ and two \add{non-negative} constants $\lb_1,\lb_2$ such that both $(\0_6,\lb_1 v,\Id)$ and $(\0_6,-\lb_2 v,\Id)$ lie in 
$G$; \add{moreover, due to the openness of $G$, these constants can be assumed to be positive and commensurable and so there exist 
two positive 
integers $n_1$ and $n_2$ such that $n_1\lb_1-n_2\lb_2=0$.}
Applying repeatedly \r{linear_sum} in the special case $\eta_0=0_3$, we obtain that 
$G$ contains $(0_9,\Id)$
and, therefore, $\{0_6\}\times\R^3\times \{\Id\}$.

Applying \r{linear_sum} again with $\bar\zeta=\eta_0$  
we deduce that $(0_6,\eta_1,\Id)$ 
is attainable from $(0_6,\eta_0,\Id)$.
\end{proof}


\section{Generic properties}\label{gen}

The expression {\it generic 
} is commonly used 
to denote 
a property of a system that is, in a suitable sense, stable under small perturbations and that, even when it fails to apply, can be forced to hold by applying to the system an arbitrary small perturbation.

In order to define precisely what a generic property means
in the present context, define
$$\Xi^m_0=\{(A,B,J)\in \M_{6\times 6}\times \M_{6\times m}\times \M_{3\times 3}\mid J=\T{J}>0,\ A\in\sym\},$$
where
$$\sym=\{A\in \M_{6\times 6}\mid A\mbox{ is symmetric and negative definite with respect to }\la\cdot ,\cdot \ra_J\}$$
and $\la\cdot ,\cdot \ra_J$ is defined as in \r{J_scalar}.

We say that a property is {\it generic for system \r{vz}--\r{R} when $m=m_0$} if there exists an open and dense set $O$ in $\Xi^{m_0}_0$ such that  
the property holds for every system defined by a triple in $O$.

Such a definition of genericity, given in terms of the 
entries 
of the matrices $A$, $J$ and $B$, 
is adapted to the finite-dimensional formulation \r{vz}--\r{R} 
but 
can appear unsatisfactory from the point of view of the physical model. 
Section~\ref{phis} discusses how to define  
the genericity directly in the space of  configurations of the organism
and provides a 
physically justified counterpart 
of the following result.

\begin{thm}\label{gnrc}
(a) Generically when $m\geq 1$ the rank of $\L$ is maximal at every point.
(b) Generically when $m\geq 3$ system \r{vz}--\r{R} is controllable.
\end{thm}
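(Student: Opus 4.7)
At the core of both parts lies the identity \eqref{CVF}: for constant vector fields $V,W$ on $\R^9\times\SO(3)$, the double bracket
\[
\Phi_J(V,W):=[[X_0,V],W]
\]
is again a constant vector field, symmetric and bilinear in $V,W$ and polynomial in the entries of $J$ (the Jacobi identity gives $\Phi_J(V,W)=\Phi_J(W,V)$ since constant vector fields commute). Iterating $\Phi_J$ on the controlled vector fields $X_1,\dots,X_m$ produces a subspace $\mathcal C$ of constant vector fields contained in $\L$, and by Remark~\ref{total}, once $\mathcal C$ covers $\R^6\times\{0_6\}$ the full system \eqref{vz}--\eqref{R} is Lie-bracket generating.

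For (a) I would fix a finite iteration procedure extracting, from the columns of $B$ and the map $\Phi_J$, an ordered tuple $V_1,\dots,V_6$; the determinant $\det(V_1|\cdots|V_6)$ is then a polynomial $P$ in the entries of $B$ and $J$, so either $P\equiv 0$ on $\Xi^1_0$ or $\{P\neq 0\}$ is Zariski-open and dense there. To exclude the former it is enough to exhibit one triple $(A,B,J)\in\Xi^1_0$ for which $P\neq 0$. This is the main computation in (a), and the witness must be chosen with care: degenerate candidates such as $X_1=(b_1,0)$ or $X_1=(0,b_2)$ with $b_2$ an eigenvector of $J$ give $\Phi_J(X_1,X_1)=0$ and trap the iteration in a proper subspace, so the example must have both halves nontrivial and be in general position relative to the eigenbasis of $J$ (for instance $J$ diagonal with distinct positive eigenvalues and $b_1,b_2$ with no vanishing components).

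For (b) I would invoke Proposition~\ref{GENERALE} to reduce the controllability of \eqref{vz}--\eqref{R} to (i) Lie-bracket generating at $(0_9,\Id)$, inherited from (a) since $m\geq 3\geq 1$, and (ii) controllability of the $z$-subsystem $\dot z=Az+E(z)+Bu$ on $\R^6$. For (ii) I would combine three ingredients. First, $V(z)=\la z,z\ra_J$ is a strict Lyapunov function for the drift, since $\la E(z),z\ra_J=0$ (using $(\xi\times\omega)\cdot\xi=0$ and $((J\omega)\times\omega)\cdot\omega=0$, together with the symmetry of $J$) and $\la Az,z\ra_J<0$ for $z\neq 0$ by $A\in\sym$, so the drift is globally asymptotically stable to $0$. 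Second, the linearization $(A,B)$ at $0$ satisfies Kalman's rank condition generically on $\Xi^m_0$, giving small-time local controllability at $0$ in both directions (the linear Gramian being invertible). Third, the $z$-subsystem is Lie-bracket generating on $\R^6$ by a verbatim repetition of (a) in the reduced state space. Combining these: for any $z_0,z_1$ one flows from $z_0$ with $u=0$ to enter a neighborhood of $0$ and corrects to $0$ exactly via small-time local controllability, then uses the openness of $\A(0)$ from part (a)'s analog in $\R^6$ to reach $z_1$. The hypothesis $m\geq 3$ enters to guarantee that the several generic conditions at play cut out a common open dense subset of $\Xi^m_0$ and to provide enough independent control directions for the final chaining argument.

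The principal technical obstacle is the explicit verification in (a): because the bilinear map $\Phi_J$ has several degenerate directions dictated by the spectrum of $J$, careful tracking of how iterated brackets enlarge the span is necessary, and the most natural simple examples fail. A secondary difficulty lies in the last step of (b), namely the rigorous passage from local controllability at $0$, Lie-bracket generating, and drift stability to the global equality $\A(0)=\R^6$; this is the standard but non-trivial extension of the linear result ($A$ stable plus Kalman implies global linear controllability) to a nonlinear system with polynomial drift, carried out via the implicit function theorem at $0$ combined with openness of attainable sets at interior points.
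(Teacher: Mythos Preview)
Your approach to (a) is the paper's: iterate the bilinear map $\Phi_J$ on the columns of $B$ and invoke Remark~\ref{total} once the resulting constant vector fields span $\R^6\times\{0_6\}$. The paper carries this out explicitly, producing the polynomial inequalities \eqref{hypo1}--\eqref{specJ} in $(b_1,b_2,J)$ and verifying by direct computation that they suffice (Proposition~\ref{p-a}); your ``exhibit one witness'' formulation is logically equivalent but leaves that computation as a promise.

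Part (b), however, contains a genuine gap. Your three ingredients are all correct: $V(z)=\la z,z\ra_J$ is indeed a strict Lyapunov function for the drift, Kalman for $(A,B)$ is generic and yields STLC at $0$, and the $z$-subsystem is Lie-bracket generating. But these do \emph{not} combine to give $\A(0)=\R^6$. The Lyapunov function lets you steer any $z_0$ to $0$; STLC then lets you reach a neighborhood $U$ of $0$ from $0$; but nothing in your outline explains how to leave $U$ and reach an arbitrary $z_1$. The linear analogy you invoke is misleading: for $\dot z=Az+Bu$, global controllability follows from Kalman \emph{alone} and has nothing to do with stability of $A$, so there is no ``standard nonlinear extension'' of the form you describe via the implicit function theorem and openness. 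The time-reversed drift satisfies $\tfrac{d}{dt}V>0$, so the argument cannot be run backward, and the openness of attainable sets granted by the Lie-bracket condition says nothing about their being all of $\R^6$.

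The paper's route to controllability of the $z$-subsystem is entirely different and is where $m\ge 3$ is actually used. Generically $B_2\in\M_{3\times m}$ has rank $3$; this permits a feedback reduction (Lemma~\ref{propcontr3}) from the six-dimensional system \eqref{vz} to the three-dimensional system \eqref{syst3}, whose admissible velocities form a family of affine vector fields on $\R^3$ parameterized by $v\in\R^3$. Controllability of that family is then obtained via the Jurdjevic--Sallet theorem for affine systems together with a case split on the signature of $\tilde A_{11}^s=(\tilde A_{11}+\T{\tilde A_{11}})/2$ (Lemmas~\ref{contr1} and~\ref{contr2}), the indefinite case being handled directly and the definite case requiring an ad hoc normal-accessibility and unboundedness argument. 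This machinery is what replaces the missing step in your proposal.
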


The generic conditions ensuring the Lie-bracket generating condition 
 and the controllability of \r{vz}--\r{R}
are given explicitly in the Propositions~\ref{p-a} and \ref{p-c}. 
Notice that
the techniques applied in this section are independent of the structure imposed on $A$ 
by the physical motivations of the model, namely, the fact that 
$A\in\sym$. 
Indeed, the conditions that{are} obtained in the following sections define an open and dense set in 
$$\Xi^m_1=\{(A,B,J)\in \M_{6\times 6}\times \M_{6\times m}\times \M_{3\times 3}\mid J=\T{J}>0\}$$
for $m\geq 1$ (for the Lie-bracket generating condition) or $m\geq 3$ (for the controllability) whose intersection with $\Xi^m_0$ is open and dense in $\Xi^m_0$.
In other words, Lie-bracket generating condition and controllability are generic 
properties (for $m=1$ and $m=3$, respectively) also in the class of systems of the form \r{vz}--\r{R} for which the triple $(A,B,J)$ is taken in $\Xi^m_1$.

\subsection{Generic Lie-bracket generating condition for $m=1$}\label{g-a}

When $m=1$ the two matrices $B_1$ and $B_2$ are column vectors that we denote by $b_1$ and $b_2$ respectively.
Then $X_1=({b_1},{b_2},0_6)$ and $\L$ is the Lie algebra generated by $X_0$ and 
$X_1$.

\begin{prop}\label{p-a}
Let $i=0$ or $i=1$. There exists an open and dense
 set of triples $(A,B,J)$ in 
 $\Xi^1_i$
 such that system~\r{vz}--\r{R} is Lie-bracket generating. 
 More precisely,   if $J$, $b_1$, and $b_2$ satisfy the three following conditions
\br
\det(b_1, Jb_2, J^2b_2)&\neq& 0,\label{hypo1}\\
\det(b_2,Jb_2,J^2b_2)&\neq&0,\label{hypo2}\\
\det\lp J- \dfrac{\|Jb_2\|^2}{\ps{Jb_2}{b_2}}\Id\rp&\ne&0, \label{specJ}
\er
then $\L$ is maximal at every point.
\end{prop}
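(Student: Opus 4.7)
My plan is to apply Remark~\ref{total}: it suffices to exhibit six linearly independent constant vector fields in $\L$, that is, to show that the subspace $\mathcal{C}\subset\R^6$ formed by the values of the constant vector fields lying in $\L$ equals $\R^6$. We have $V_0:=(b_1,b_2)=X_1\in\mathcal{C}$, and \r{CVF} shows that $\mathcal{C}$ is closed under the symmetric bilinear operation $C(V,W)=[[X_0,V],W]$, which is again a constant vector field. I will work with the linear map $T(V):=C(V_0,V)$, which in the splitting $\R^6=\R^3\oplus\R^3$ has the block-triangular form
\[T=\begin{pmatrix} S(b_2) & -S(b_1) \\ 0 & \phi\end{pmatrix},\qquad \phi(v):=J^{-1}\bigl(Jb_2\times v-b_2\times Jv\bigr).\]
Setting $F:=\{v\in\R^3:(v,0)\in\mathcal{C}\}$, we have $\dim\mathcal{C}=\dim\pi_2(\mathcal{C})+\dim F$, and $F$ is $S(b_2)$-invariant. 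Since the only $S(b_2)$-invariant subspaces of $\R^3$ are $\{0\}$, $\R b_2$, $b_2^\perp$, and $\R^3$, the strategy reduces to proving $\pi_2(\mathcal{C})=\R^3$ and $F=\R^3$.

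For $\pi_2(\mathcal{C})=\R^3$, the identity $\pi_2\circ T=\phi\circ\pi_2$ reduces the claim to the cyclicity of $b_2$ under $\phi$, that is, to the linear independence of $\{b_2,\phi(b_2),\phi^2(b_2)\}$. First I will exclude $\phi(b_2)\in\R b_2$: since $J\phi(b_2)=2(Jb_2\times b_2)$ is orthogonal to $Jb_2$, such a relation would force $b_2$ to be an eigenvector of $J$, contradicting~\r{hypo2}. Then I will exclude $\phi^2(b_2)\in\mathrm{span}(b_2,\phi(b_2))$ by computing in the eigenbasis of $J$ (well defined because \r{hypo2} forces distinct eigenvalues and nonzero components of $b_2$ in each eigendirection), where $\phi$ factors as $\mathrm{diag}(\mu_i)P$ with $\mu_i=(\lambda_{i+1}-\lambda_{i-1})/\lambda_i$ and $P$ a symmetric zero-diagonal matrix built from the components of $b_2$. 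A direct expansion then displays $\det(b_2,\phi(b_2),\phi^2(b_2))$ as a factor nonvanishing under~\r{hypo2} multiplied by one proportional to $\det(J-\mu I)$ with $\mu=\|Jb_2\|^2/\langle Jb_2,b_2\rangle$, so that~\r{specJ} is precisely what ensures nonvanishing.

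For $F=\R^3$ I will construct two types of elements of $F$. First, letting $p(x)$ be the characteristic polynomial of $\phi$, the element $W:=p(T)V_0\in\mathcal{C}$ has second component $p(\phi)b_2=0$, so $W=(w,0)\in F$. Using the Cayley--Hamilton relation $S(b_2)^3=-\|b_2\|^2S(b_2)$, the vector $w$ depends linearly on $b_1$; I will identify its kernel (as a function of $b_1$) as a specific one-dimensional subspace of $\mathrm{span}(Jb_2,J^2b_2)$ and show that $w\in\R b_2$ only when $w=0$. Hypothesis~\r{hypo1}, which given~\r{hypo2} is equivalent to $b_1\notin\mathrm{span}(Jb_2,J^2b_2)$, then produces an element $w\in F$ that is neither zero nor parallel to $b_2$. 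Second, using $V_1=TV_0$, the bracket $C(V_1,V_1)$ has second component $2J^{-1}(J\phi(b_2)\times\phi(b_2))$, and the geometric identity that $n\times J^{-1}n\parallel Jb_2$ for $n=Jb_2\times b_2$ shows this is a scalar multiple $\kappa b_2$ of $b_2$; subtracting $\kappa V_0$ yields an element of $F$ whose inner product with $b_2$ is, under~\r{hypo1}, nonzero. Combining, $F$ contains elements neither in $\R b_2$ nor in $b_2^\perp$, and by its $S(b_2)$-invariance the only remaining possibility is $F=\R^3$.

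The main obstacle I expect is the pair of algebraic verifications: factoring $\det(b_2,\phi(b_2),\phi^2(b_2))$ to isolate the $\det(J-\mu I)$ factor required by~\r{specJ}, and writing out the explicit form of $w=Lb_1$ (identifying $\ker L$ inside $\mathrm{span}(Jb_2,J^2b_2)$) together with the matching computation for $C(V_1,V_1)-\kappa V_0$ so that~\r{hypo1} is seen to cover both the ``$F\not\subset\R b_2$'' and ``$F\not\subset b_2^\perp$'' exclusions. The openness and density statements follow immediately: \r{hypo1}--\r{specJ} describe complements of proper algebraic hypersurfaces in the parameter space and, since they involve only $b_1$, $b_2$, and $J$ and not $A$, their common complement is open and dense in both $\Xi^1_1$ and $\Xi^1_0$.
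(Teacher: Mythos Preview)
Your plan is correct and is essentially the paper's proof reorganized through the block-triangular operator $T$. The correspondences are tight: $\phi(b_2)=2\det(J^{-1})\,b_4$ and $\phi^2(b_2)=2\det(J^{-1})\,b_5$ with the paper's $b_4=J^2b_2\times Jb_2$ and $b_5=J^{-1}(Jb_4\times b_2+Jb_2\times b_4)$, so your cyclicity determinant $\det(b_2,\phi(b_2),\phi^2(b_2))$ is, up to a nonzero factor, the paper's $D=\det(b_2,b_4,b_5)$. The paper evaluates $D$ not by an eigenbasis expansion but through the identity $C^{-1}(Cb\times b)=\det(C^{-1})(C^2b\times Cb)$ together with Cayley--Hamilton, obtaining $D=\langle Jb_2,b_2\rangle^3\det(J-\lambda\Id)$ directly; your eigenbasis route would reach the same endpoint. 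Your ``second construction'' is likewise the paper's: writing $q=J^{-1}(Jb_2\times b_2)$ one finds that $C(V_1,V_1)-\kappa V_0$ is a nonzero multiple of $(b_2,0)$ precisely when $\langle q,b_1\rangle\ne0$, which is \r{hypo1}. The paper runs the two halves in the opposite order (first $F=\R^3$ via its $Z_1,Z_2,Z_3$, then the $\pi_2$-part via $W_1,W_2,W_3$), but that is cosmetic.

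The one place your proposal genuinely diverges---and weakens---is the ``first construction'' $W=p(T)V_0$. The assertions that $b_1\mapsto w$ has one-dimensional kernel lying inside $\spann(Jb_2,J^2b_2)$ and that $w\in\R b_2$ forces $w=0$ are not obvious; you flag them yourself as the main obstacle, and verifying them would require substantial additional algebra. They are also unnecessary. Once $(b_2,0)\in\mathcal{C}$ from the second construction, the bilinear operation gives $C((b_2,0),V)=(\pi_2(V)\times b_2,0)$ for every $V\in\mathcal{C}$; since you have already established $\pi_2(\mathcal{C})=\R^3$, this immediately yields $b_2^\perp\subset F$, hence $F=\R^3$. (Even without the full $\pi_2$ result, bracketing against $V_1$ alone gives $q\times b_2\in F\smallsetminus\R b_2$, which is exactly the paper's $Z_2$ step.) Replace the $p(T)V_0$ construction by this one-line observation and your argument is complete and equivalent to the paper's.
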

\proof
Let us prove, first of all, that if $C\in\M(3\times 3)$ is symmetric and invertible, and if $b\in\R^3$ is such that $C b$ and $C^2 b$ are linearly independent, then
\begin{equation}\label{algebraic_abstract}
 C^{-1}(Cb\times b) = \det(C^{-1})(C^2 b\times Cb).
\end{equation}
Notice that  
$\ps{C^{-1}(Cb \times b)}{Cb}=0=\ps{C^{-1}(Cb\times b)}{C^2b}$, and therefore
\begin{equation}
\label{eg-vect}
C^{-1}(Cb\times b)=k(C^2b\times Cb),
\end{equation}
for some $k\in\R$. 
For every  $x\in\R^3$  we have $\det(C^{-1}x,Cb,b) = (\det C^{-1})\det(x,C^2 b,Cb)$ and  also
\begin{align*}
\det(C^{-1}x,Cb,b) & = \ps{C^{-1}x}{Cb\times b} = \ps{x}{C^{-1}(Cb\times b)}
= k\ps{x}{C^2 b\times C b} = k \det(x, C^2b,Cb).
\end{align*}
Therefore 
\[
(\det C^{-1})\det(x,C^2b,Cb) = k \det(x,C^2b,Cb)
\]
for every $x\in\R^3$, proving that \r{algebraic_abstract} holds true. 
In particular, taking $J=C$ and $b=b_2$, we have
\begin{equation}\label{algebraic}
 J^{-1}(Jb_2\times b_2) = \det(J^{-1})(J^2 b_2\times Jb_2).
\end{equation}

Denote by 
$V_1$ the Lie bracket $[[X_0,X_1],X_1]$. According to 
\r{CVF} we have  
\[
V_1=2\,((b_2\times b_1),J^{-1}(Jb_2\times b_2),\0_6).
\]
Similarly, the definition $V_2=\dfrac14[[X_0,V_1],V_1]$ 
leads to the expression
\[
V_2 = (-\ps{J^{-1}(Jb_2\times b_2)}{b_2}\, b_1 +\ps{J^{-1}(Jb_2\times b_2)}{b_1}\,{b_2},-\ps{J^{-1}(Jb_2\times b_2)}{b_2}\,{b_2},\0_6).
\]
Since $\ps{J^{-1}(Jb_2\times b_2)}{b_2}\,X_1 + V_2= 
{(\ps{J^{-1}(Jb_2\times b_2)}{b_1}\,{b_2},\0_9)}$  we 
have that
\[
Z_1= {({b_2},\0_9)}
\]
belongs to $\L$, 
as it follows from 
\r{hypo1} and from  
\r{algebraic}.

Let $Z_2=\dfrac12[[X_0,Z_1],V_1]$. 
Applying again \r{CVF}, we have that
\[
Z_2=(-b_2\times J^{-1}(Jb_2\times b_2),\0_9).
\]
Equation \r{algebraic}
implies that
\[
Z_2 = ((\det J^{-1})\,b_2\times(Jb_2\times J^2b_2),0_9).
\]
Letting 
\[
b_3=b_2\times(Jb_2\times J^2b_2)
\]
we can represent  
$Z_3=\dfrac12[[X_0,Z_2],X_1]$ as
\[
Z_3=(b_2\times b_3,\0_9).
\]
We have shown that the Lie algebra $\L$  
contains the vector fields
\[
Z_1=(b_2,\0_9),\ \ Z_2= (b_3,\0_9),\ \ Z_3 = (b_2\times b_3,\0_9).
\]
Notice that 
$$
b_3=\la b_2,J^2 b_2\ra J b_2-\la b_2,J b_2\ra J^2 b_2
$$
is not in $\spann(b_2)$, since $\la b_2,J b_2\ra>0$ and because of
\r{hypo2}.
As a consequence, the 
vectors $b_2$, $b_3$ and $b_2\times b_3$ are linearly independent.

Therefore, $\L$ contains every constant vector field 
of the type $(v,\0_9)$, with $v$ in $\R^3$. It follows 
that 
$$W_1=(\0_3,b_2,\0_6)$$ 
is in $\L$.
As a consequence of \r{CVF},  
$
[[X_0,W_1],W_1]=2(\0_3,J^{-1}(Jb_2\times b_2),\0_6)
$
lies in $\L$ as well. Due to \r{algebraic}, we deduce that
\[
W_2 = (\0_3,J^2b_2\times Jb_2,\0_6)
\]
belongs to $\L$. 
Let 
\[
b_4=J^2b_2\times Jb_2
\]
and let $W_3=[[X_0,W_1],W_2]$; we have $W_3=(0_3,b_5,0_6)$ with $b_5=J^{-1}(Jb_4\times b_2 +Jb_2\times b_4)$. 
We are left to prove 
 that the vectors $b_2$, $b_4$ and $b_5$ are linearly independent. Let $D=\det(b_2,b_4,b_5)$ and notice that
\begin{align*}
D & = \det(b_2,b_4,J^{-1}(Jb_4\times b_2 +Jb_2\times b_4))\\
& = (\det J^{-1})\det(Jb_2,Jb_4, Jb_4\times b_2 +Jb_2\times b_4)\\
& = (\det J^{-1})\ps{Jb_2}{Jb_4\times(Jb_4\times b_2) + Jb_4\times(Jb_2\times b_4)}\\
& = (\det J^{-1})\ps{Jb_2}{\ps{Jb_4}{b_4}J b_2 - \ps{Jb_4}{Jb_2}b_4 
+ \ps{Jb_4}{b_2}J b_4 - \|Jb_4\|^2b_2}.
\end{align*}
Since $\ps{Jb_4}{Jb_2}=\ps{b_4}{J^2b_2}=0$ and $\ps{Jb_4}{b_2}=\ps{b_4}{Jb_2}=0$ we have
\begin{align*}
D& = (\det J^{-1})\ps{Jb_2}{\ps{Jb_4}{b_4}J b_2 - \|Jb_4\|^2b_2}\\
& = (\det J^{-1})(\|Jb_2\|^2\ps{Jb_4}{b_4} - \|Jb_4\|^2\ps{Jb_2}{b_2}).
\end{align*}
Now, taking $C=J^{-1}$ and $b=J^2b_2$ in \r{algebraic_abstract}, we have $Jb_4 = (\det J)(Jb_2\times b_2)$, from which we deduce 
\begin{align*}
\ps{Jb_4}{b_4} & = (\det J)(\ps{Jb_2}{b_2}\ps{Jb_2}{J^2b_2} - \|Jb_2\|^4).
\end{align*}
We also have
\begin{align*}
\|Jb_4\|^2 &  = (\det J)^2\|Jb_2\times b_2\|^2\\
& = (\det J)^2(\|Jb_2\|^2\|b_2\|^2 - \ps{Jb_2}{b_2}^2).
\end{align*}
Defining $\lambda = \|Jb_2\|^2/\ps{Jb_2}{b_2}$ 
we get
\begin{align*}
D & = \|Jb_2\|^2(\ps{Jb_2}{b_2}\ps{J^2b_2}{Jb_2} - \|Jb_2\|^4) - 
(\det J)\ps{Jb_2}{b_2}(\|Jb_2\|^2\|b_2\|^2 - \ps{Jb_2}{b_2}^2)\\
& = \ps{Jb_2}{b_2}^3\left( \frac{\ps{J^3b_2}{b_2}}{\ps{Jb_2}{b_2}}\lambda - \lambda^3 
-(\det J)\left(\frac{\| b_2\|^2}{\ps{Jb_2}{b_2}}\lambda - 1\right)\right).
\end{align*}
Cayley--Hamilton theorem implies that 
\[
J^3 = a_2J^2+a_1J+(\det J)\Id,\ \ \ \ a_1,a_2\in\R,
\]
and thus
\begin{align*}
\frac{\ps{J^2b_2}{Jb_2}}{\ps{Jb_2}{b_2}} &= 
a_2\frac{\ps{J^2b_2}{b_2}}{\ps{Jb_2}{b_2}} + a_1 + (\det J)\frac{\| b_2\|^2}{\ps{Jb_2}{b_2}}\\
& = a_2 \lambda + a_1 + (\det J)\frac{\| b_2\|^2}{\ps{Jb_2}{b_2}}.
\end{align*}
Therefore, 
\begin{align*}
D & = \ps{Jb_2}{b_2}^3\left(\left (a_2\lambda + a_1 + (\det J)\frac{\| b_2\|^2}{\ps{Jb_2}{b_2}}\right)\lambda - \lambda^3 
-(\det J)\left(\frac{\| b_2\|^2}{\ps{Jb_2}{b_2}}\lambda - 1\right)\right)\\
& = \ps{Jb_2}{b_2}^3(-\lambda^3 + a_2\lambda^2 + a_1 \lambda + \det J)\\
& = \ps{Jb_2}{b_2}^3\det(J-\lambda\Id)
\end{align*}
which is different from zero because of  \r{specJ}.
\EOP

\subsection{Generic controllability in the case $m=3$}\label{g-c}

The aim of this section is to prove the genericity of the controllability 
of \r{vz}
in the case $m=3$.
Let us introduce the notation
\[
A=\begin{pmatrix}
A_{11} & A_{12}\\
A_{21} & A_{22}
\end{pmatrix}
\]
with each $A_{ij}$ belonging to $\M_{3\times 3}$.

\begin{prop}
\label{Th1}\label{p-c}
Let $i=0$ or $i=1$. 
There exists an open and dense set of triples $(A,B,J)$ in $\Xi^3_i$ such that system~\r{vz} is controllable. More precisely,  
if 
$B_2$   and $\tilde A_{11}+\T{\tilde A_{11}}$ are invertible with
$\tilde A_{11}=A_{11} -B_1B_2^{-1}A_{21}$ and if 
at least one eigenvector  of $J$ is not an eigenvector of 
$B_1B_2^{-1}$, 
then system~\eqref{vz} is controllable.
\end{prop}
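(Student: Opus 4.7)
The plan is to exploit the invertibility of $B_2$ to turn \eqref{vz} into a cascade in which $\om$ is a directly controlled integrator driving the evolution of $\xi$, and then to verify a Chow--Rashevsky--Krener type condition for this cascade using the specific nonlinearities of the drift together with hypotheses (b) and (c). Since $B_2$ is invertible, the state-dependent change of control
\[
u = B_2^{-1}\bigl(v - A_{21}\xi - A_{22}\om - J^{-1}((J\om)\times \om)\bigr),\quad v\in\R^3,
\]
is invertible and preserves controllability; it turns \eqref{vz} into $\dot\om = v$, $\dot\xi = \tilde A_{11}\xi + \tilde A_{12}\om + \xi\times\om - CJ^{-1}((J\om)\times \om) + Cv$, where $C = B_1 B_2^{-1}$ and $\tilde A_{12} = A_{12} - CA_{22}$. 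A further coordinate change $\td\xi = \xi - C\om$ eliminates $v$ from the $\td\xi$-equation and yields the cascade
\[
\dot\om = v, \qquad \dot{\td\xi} = (\tilde A_{11} - S(\om))\td\xi + G(\om),
\]
with $G(\om) = M\om + (C\om)\times\om - CJ^{-1}((J\om)\times\om)$ and $M = \tilde A_{11} C + \tilde A_{12}$. The $\td\xi$-dynamics are now driven only by $\om$, and $\om$ can be steered freely through $v$.

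In the reduced cascade the controlled vector fields are the constants $(0,e_i)$, $i=1,2,3$, and I denote by $f$ the drift $(\tilde A_{11}\td\xi + \td\xi\times\om + G(\om),\,0)$. The first-order bracket $[f,(0,e_i)]$ evaluated at the origin has top component (up to sign) equal to $Me_i$; further iterated brackets $[(0,e_j),[f,(0,e_i)]]$ and $[(0,e_k),[(0,e_j),[f,(0,e_i)]]]$ peel off, respectively, the bilinear term $\td\xi\times\om$ and the quadratic terms of $G(\om)$, producing constant vector fields whose tops are built from cross products involving $Ce_i$, $Je_i$, $e_i$, together with iterates $\tilde A_{11}^k Me_i$. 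Hypothesis (c) is the critical ingredient here: the existence of an eigenvector $a$ of $J$ which is not an eigenvector of $C$ makes $(Ca)\times a$ a nonzero vector that appears (up to a linear combination involving $J$) in one of the iterated brackets, supplying a direction transverse to the purely linear Kalman-type span. Hypothesis (b) plays a role analogous to \eqref{specJ} in the proof of Proposition~\ref{p-a}: invertibility of $\tilde A_{11}+\T{\tilde A_{11}}$ ensures that a determinant expressing the linear independence of the selected iterated brackets is nonzero, so that their tops span $\R^3$, which together with $(0,e_i)$ yields a spanning family for $\R^6$ at every point.

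Once the Lie algebra rank condition holds everywhere, Krener's theorem gives accessibility at every point, and the cascade structure makes upgrading to controllability essentially routine: because $\om$ is a pure integrator, any absolutely continuous curve with prescribed endpoints is an admissible $\om$-trajectory, and the derivative of the resulting terminal map $\om(\cdot)\mapsto\td\xi(T)$ with fixed endpoints is surjective onto $\R^3$ (integrated form of LARC applied to variations of $\om(\cdot)$ with fixed endpoints). A standard open-mapping argument, supplemented by a concatenation using the affine flow of the $\td\xi$-equation along constant-$\om$ segments, then shows reachability of every target $(\td\xi_1,\om_1)$ from every $(\td\xi_0,\om_0)$; reversing the two invertible transformations gives controllability of \eqref{vz}. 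The main obstacle I expect is the algebraic core of the second step: identifying a parsimonious family of iterated brackets and verifying, through cross-product manipulations, that hypotheses (b) and (c) are precisely the conditions that force their tops to span $\R^3$.
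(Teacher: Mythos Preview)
Your reduction step coincides with the paper's Lemma~\ref{propcontr3}: the feedback and the change $\tilde\xi=\xi-C\om$ reduce controllability of \eqref{vz} to controllability of the three-dimensional system \eqref{syst3}, $\dot x=(\tilde A_{11}-S(v))x+G(v)$, with $v\in\R^3$ free. After this point, however, your argument has a genuine gap. The family $\mathscr F=\{F_v:v\in\R^3\}$ is not symmetric (asking $-F_v\in\mathscr F$ would force $\tilde A_{11}^s=0$, which your invertibility hypothesis excludes), so a Lie-algebra rank condition yields at best accessibility, not controllability. Your upgrade sketch --- surjective differential of the endpoint map, then ``concatenation along constant-$\om$ segments'' --- only shows reachable sets are open; it supplies no reason they should be all of $\R^3$. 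In particular, following the affine flow at fixed $v$ is contractive whenever $\tilde A_{11}^s<0$, so such concatenations cannot by themselves produce global reachability. You also do not carry out the bracket computations, and the invertibility of $\tilde A_{11}^s$ does \emph{not} enter as a bracket-determinant condition in the manner of Proposition~\ref{p-a}.

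The paper proceeds quite differently after the reduction, recognising \eqref{syst3} as a family of \emph{affine} vector fields and invoking the Jurdjevic--Sallet machinery. The invertibility of $\tilde A_{11}^s$ is used only to enable a case split on whether its eigenvalues have opposite signs or the same sign. In the first case (Lemma~\ref{contr1}) the linear parts generate the same closed convex cone as $\{\tilde A_{11}^s\}\cup\{S(w):w\in\R^3\}$, hence act transitively on $\R^3\setminus\{0\}$; the Jurdjevic--Sallet theorem then requires only that $\mathscr F$ have no common fixed point, and the eigenvector hypothesis is exactly what rules this out (take $v$ an eigenvector of $J$ but not of $C$ in \eqref{valpropBJ}). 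In the second case (Lemma~\ref{contr2}) the linear parts are no longer transitive on $\R^3\setminus\{0\}$, and the paper instead shows that each $\A(x,\pm\mathscr F)$ is open (via a normal-accessibility argument on spheres, Lemma~\ref{spheres}) and unbounded (the eigenvector hypothesis again, now guaranteeing that the quadratic term $q(v)=-(Cv)\times v-CJ^{-1}((Jv)\times v)$ is nonzero for some $v$, so rescaling $v$ drives $x$ to infinity in both time directions, Lemma~\ref{unbondedness}); openness of reachable sets for both $\mathscr F$ and $-\mathscr F$ then makes every reachable set closed as well, hence equal to $\R^3$. These are the mechanisms your proposal is missing.
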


Propositions~\ref{p-a} and \ref{p-c} 
lead to
 the following statement, which follows from Proposition~\ref{GENERALE}.
\begin{cor}
For $m\geq  3$ 
and $i=0$ or $i=1$, there exists an open and dense set of triples $(A,B,J)$ in $\Xi^3_i$ such that
system~\r{vz}--\r{R} is controllable.
\end{cor}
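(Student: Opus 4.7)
The plan is to combine the two preceding propositions via Proposition~\ref{GENERALE}, which characterizes controllability of the full system \r{vz}--\r{R} as the conjunction of (i) controllability of the $z$-subsystem \r{vz} and (ii) the Lie-bracket generating condition at $(z,\zeta,R)=(0_9,\Id)$. Since both (i) and (ii) are known to hold on explicit open and dense subsets of the relevant parameter space, the conclusion will follow by intersecting these subsets.

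More concretely, I would first select a privileged column of $B$, say the first one, and apply Proposition~\ref{p-a} to $b_1$ and $b_2$ chosen as the first columns of $B_1$ and $B_2$ respectively; this yields the three polynomial inequalities \r{hypo1}, \r{hypo2}, \r{specJ}, each of which defines an open dense subset of $\Xi^m_i$ (the inequalities involve only $J$ and these two columns, so they are well-posed for any $m\geq1$). Under these inequalities the Lie algebra generated by $X_0$ together with the vector field associated with the first control already has maximal rank, and a fortiori so does the full Lie algebra $\L$ generated by $X_0,X_1,\dots,X_m$. Next, I would apply Proposition~\ref{p-c} to the $6\times 3$ submatrix formed by the first three columns of $B$: invertibility of the corresponding $B_2$-block, invertibility of $\tilde A_{11}+\T{\tilde A_{11}}$, and the eigenvector mismatch between $J$ and $B_1B_2^{-1}$ again cut out an open dense subset of $\Xi^m_i$ and guarantee controllability of \r{vz} restricted to the first three controls; freezing the remaining controls to $0$ then shows that \r{vz} is controllable for any $m\geq3$.

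The intersection of these two open dense subsets of $\Xi^m_i$ is still open and dense, and on it both conditions (i) and (ii) of Proposition~\ref{GENERALE} hold. Applying that proposition finishes the argument.

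I do not expect any substantial obstacle: the only mild subtlety is ensuring that the explicit algebraic conditions furnished by Propositions~\ref{p-a} and \ref{p-c} remain non-trivial open conditions when $m>3$, which is automatic because they depend only on $A$, $J$, and a fixed subset of columns of $B$; adding further columns merely enlarges the family of admissible control directions and can only help both controllability of \r{vz} and the Lie-bracket generating property.
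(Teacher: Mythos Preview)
Your proposal is correct and follows exactly the route the paper indicates: the corollary is stated as an immediate consequence of Propositions~\ref{p-a} and \ref{p-c} combined through Proposition~\ref{GENERALE}. Your write-up simply spells out in more detail how to handle $m>3$ by restricting to a fixed subset of columns of $B$, which the paper leaves implicit.
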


The proof of Proposition~\ref{Th1} will be split in several steps. 
The following lemma allows us to study the controllability of 
 \eqref{vz} by investigating the controllability of an equivalent 
 system in $\R^3$ (instead of $\R^6$).

\begin{lemma}
\label{propcontr3}
Assume that $B_2$ is invertible and define $\tilde B=B_1 B_2^{-1}$ and $\tilde A_{11}=A_{11} -\tilde B A_{21}$. Then, the controllability of \eqref{vz} is equivalent to the controllability of the system
\begin{equation}
\label{syst3}
\dot x  =\tilde A_{11}x - v\times x + (\tilde A_{11}\tilde B+A_{12} - \tilde B A_{22})v -v \times\tilde Bv - \tilde B J^{-1}(Jv\times v),
\quad x\in\R^3,
\end{equation}
with the control $v$ taking values in $\R^3$.
\end{lemma}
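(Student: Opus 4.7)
The approach is a state feedback combined with a change of state variables that exploits the invertibility of $B_2$. Writing \eqref{vz} block by block,
\begin{align*}
\dot\xi &= A_{11}\xi + A_{12}\omega + \xi\times\omega + B_1 u,\\
\dot\omega &= A_{21}\xi + A_{22}\omega + J^{-1}(J\omega\times\omega) + B_2 u,
\end{align*}
the second equation can be solved for $u$ in terms of $\dot\omega$ and the state. This gives an affine bijection between $u\in L^\infty$ and $\dot\omega\in L^\infty$, so \eqref{vz} is equivalent to the system in which the absolutely continuous trajectory $\omega(\cdot)$ is freely prescribed (with initial value $\omega_0$) and $\xi(\cdot)$ is driven accordingly. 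Substituting $B_1u=\tilde B B_2 u=\tilde B(\dot\omega-A_{21}\xi-A_{22}\omega-J^{-1}(J\omega\times\omega))$ into the $\xi$-equation and setting $x=\xi-\tilde B\omega$, $v=\omega$, the $\dot\omega$ contribution cancels from $\dot x=\dot\xi-\tilde B\dot v$; expanding with $\xi=x+\tilde Bv$ and using the antisymmetry of the cross product recovers exactly the right-hand side of \eqref{syst3}.

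\textbf{Equivalence of controllability.} The diffeomorphism $(\xi,\omega)\leftrightarrow(x,v)$ together with the feedback above turns \eqref{vz} into the augmented system $\dot x=(\text{RHS of \eqref{syst3}})$, $\dot v=w$, with $w\in L^\infty(\R^3)$ the new control. The forward direction of the lemma is immediate: for any $x_0,x_1\in\R^3$, controllability of \eqref{vz} yields a steering from $(\xi_0,\omega_0)=(x_0,0_3)$ to $(\xi_1,\omega_1)=(x_1,0_3)$, and the associated $v:=\omega$ is an $L^\infty$ function that drives $x_0$ to $x_1$ along \eqref{syst3}. For the converse, given initial and final data $(\xi_i,\omega_i)$ and the corresponding $(x_i,v_i)=(\xi_i-\tilde B\omega_i,\omega_i)$, one must produce an absolutely continuous $v$ with $v(0)=v_0$ and $v(T)=v_1$ driving $x_0$ to $x_1$. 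Starting from an $L^\infty$ control $v_*$ on $[0,T_*]$ steering $x_0\to x_1$ (supplied by controllability of \eqref{syst3}), one prepends and appends short ramps of length $\epsilon$ joining $v_0$ to $v_*(0^+)$ and $v_*(T_*^-)$ to $v_1$; by continuity of the endpoint map in the $L^1$ norm the modified trajectory lands within $O(\epsilon)$ of $x_1$, and a further short sub-trajectory---using the local accessibility of \eqref{syst3} implied by its controllability---absorbs this discrepancy.

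\textbf{Main obstacle.} The algebraic computation in the first paragraph is routine, and the forward direction is trivial. The delicate step is the boundary matching in the converse: upgrading an $L^\infty$ control for \eqref{syst3} (which has no endpoint constraints on $v$) into an absolutely continuous $v$ matching prescribed values at both endpoints, without losing exact reachability of $x_1$. This is handled by combining the $L^1$-continuity of the endpoint map with a local controllability argument for \eqref{syst3}, in the spirit of the concatenation reasoning used in Proposition~\ref{GENERALE}.
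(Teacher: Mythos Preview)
Your reduction---the change of variables $x=\xi-\tilde B\omega$ together with the feedback through $B_2^{-1}$---is exactly what the paper does, and your algebra and forward implication coincide with the paper's proof.

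For the converse the paper confronts the same obstacle you single out, but handles it by citation rather than construction: it invokes the known fact that controllability of \eqref{syst3} by $L^\infty$ controls persists when the controls are restricted to smooth functions with prescribed endpoint values $v(0)=\omega_0$, $v(T)=\omega_1$ (referring to Grasse, Brunovsk\'y--Lobry, and Jurdjevic's book). Your ramp-plus-correction sketch is in the right spirit but, as written, is circular. After the two ramps you sit at some $x'$ close to $x_1$ with $v$ equal to $v_1$; the ``further short sub-trajectory'' you propose comes from controllability of \eqref{syst3}, which supplies only an $L^\infty$ control $\hat v$ with no constraint on its initial or terminal value. Splicing $\hat v$ in either destroys absolute continuity of $v$ or forces yet another pair of ramps, yet another small error, and you are back at the same step. ``Local accessibility of \eqref{syst3}'' does not help here, since what you actually need is small-time local controllability of the \emph{augmented} system $(\dot x,\dot v)=(F(x,v),w)$ near $(x_1,v_1)$, and that is not a consequence of controllability of \eqref{syst3} alone. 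Closing the loop requires an extra ingredient---e.g.\ an open-mapping or degree argument applied to a finite-parameter family of boundary-matched controls---which is precisely the content of the references the paper invokes; without it the converse is not complete.
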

\begin{proof}
Suppose that system~\eqref{vz} is controllable. Let $x=\xi - \tilde B\omega$ and rewrite system~\eqref{vz} as
\begin{syst}
\dot x  &=\tilde A_{11}x - \omega\times x + (\tilde A_{11}\tilde B+A_{12} - \tilde B A_{22})\omega -\omega \times\tilde B\omega - \tilde B J^{-1}(J\omega\times \omega),\\
\dot\omega&= A_{21}x +( A_{21}\tilde B+A_{22})\omega + J^{-1}(J\omega\times\omega) + B_2\,u.
\end{syst}

Given two points $(x_0,\omega_0)$ and $(x_1,\omega_1)$ in $\R^6$, there exists a 
control $u:[0,T] \to\R^3$ that steers $(x_0,\omega_0)$ to $(x_1,\omega_1)$. Thus, denoting by $(x(\cdot),\omega(\cdot))$ the corresponding trajectory, the control $v(t)=\omega(t)$ steers system~\r{syst3} from $x_0$ to $x_1$.

Conversely, let system~\eqref{syst3} be controllable and fix two pairs $(\xi_0,\omega_0)$ and $(\xi_1,\omega_1)$ in $\R^6$. 
Notice that \eqref{syst3} can be controlled by smooth controls $v:[0,T]\to\R^3$
satisfying 
\be\label{b.c} 
v(0)=\om_0,\ \ \ \ \ v(T)=\om_1.
\ee
(The result follows from the controllability of \eqref{syst3}
and the density of $\{ v\in\CC^\infty ([0,T],\R^3)\mid v(0)=\om_0,\ v(T)=\om_1\}$ in $L^\infty(0,T)$ with respect to the $L^1$-norm. 
The proof can be deduced from the general results in \cite{grasse} or
easily adapted from \cite{bruno_lobry}
and \cite[Theorem 4, p. 110]{Jur}.)

Therefore, if $v\in\CC^\infty ([0,T],\R^3)$ satisfies \r{b.c} and steers \r{syst3} from 
$\xi_0-\tilde B \om_0$ to $\xi_1-\tilde B \om_1$, then 
$$u(t)=B_2^{-1}\lp\dot v(t)- A_{21}x(t) -( A_{21}\tilde B+A_{22})v(t) - J^{-1}(Jv(t)\times v(t))\rp$$
steers \eqref{vz} from $(\xi_0,\omega_0)$ to $(\xi_1,\omega_1)$.
\end{proof}

The following lemma establishes the controllability of 
system~\eqref{syst3} in a first case. 

\begin{lemma}
\label{contr1}
Assume that the 
matrix $\tilde A_{11}+\tilde A_{11}^\mathrm{T}$
has two nonzero eigenvalues with opposite signs. 
Then there exists an open and dense subset $O$
of $\{(B,J)\in \M_{6\times 3}\times \M_{3\times 3}\mid J=\T{J}>0\}$ 
such that system \eqref{syst3} is controllable if $(B,J)\in O$.  More precisely, 
if $B_2$ is invertible and if
at least one eigenvector  of $J$ is not an eigenvector of 
$B_1B_2^{-1}$, then system~\eqref{syst3} is controllable.
\end{lemma}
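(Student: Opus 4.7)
The open-and-dense conclusion follows immediately from the explicit sufficient condition stated in the lemma: invertibility of $B_2$ is a Zariski-open condition, and (given $B_2$ invertible) the requirement that some eigenvector of $J$ fail to be an eigenvector of $\tilde B=B_1B_2^{-1}$ fails exactly on the proper algebraic locus where $J$ and $\tilde B$ commute. Thus the real content is the implication: under these two conditions together with the hypothesis on $\tilde A_{11}+\T{\tilde A_{11}}$, system \eqref{syst3} is controllable. I would split that implication into an accessibility step and a global reachability step.

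For accessibility, write the vector field as $g(x,v)=\tilde A_{11}x-v\times x+\phi(v)$ with $\phi(v)=(\tilde A_{11}\tilde B+A_{12}-\tilde BA_{22})v-v\times\tilde Bv-\tilde BJ^{-1}(Jv\times v)$, and note that if $e_*$ is an eigenvector of $J$ then $Je_*\times e_*=0$, so the last summand of $\phi(e_*)$ drops out. The eigenvector hypothesis supplies some such $e_*$ with $e_*\times\tilde Be_*\ne 0$, and the second-order $v$-variation $\lim_{\varepsilon\to 0}\varepsilon^{-2}\bigl(g(x,\varepsilon e_*)+g(x,-\varepsilon e_*)-2g(x,0)\bigr)=-2e_*\times\tilde Be_*$ therefore yields a nonzero constant vector field in the accessibility algebra. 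Bracketing this constant direction with the drift $\tilde A_{11}x$ and with the rotational first-order $v$-variations $h_w(x)=-w\times x+(\tilde A_{11}\tilde B+A_{12}-\tilde BA_{22})w$ (for varying $w\in\R^3$) supplies the remaining two linearly independent directions and gives the bracket-generating property at every point of $\R^3$.

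For global reachability I would exploit the hypothesis on $\tilde A_{11}+\T{\tilde A_{11}}$: having a strictly positive and a strictly negative eigenvalue, it supplies unit vectors $x_+,x_-$ with $\ps{x_\pm}{\tilde A_{11}x_\pm}\gtrless 0$, so that with $v\equiv 0$ and $x$ aligned with $x_\pm$ the norm $|x|$ grows or shrinks exponentially. On the other hand, the skew term $-v\times x$ provides a rotational degree of freedom: passing to the body frame $y=Rx$ with $\dot R=RS(v)$, the dynamics become $\dot y=R\tilde A_{11}R^\mathrm{T}y+R\phi(v)$, a linear time-varying equation whose forcing $R\phi(v)$ can be steered in any prescribed direction. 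The plan for connecting a given $x_0$ to a given $x_1$ is then: reorient $x_0$ toward the growth direction $x_+$ (if $|x_1|>|x_0|$) or the decay direction $x_-$ (otherwise), run the drift until $|x|=|x_1|$, and finally reorient to the direction of $x_1$.

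The main obstacle is that the rotation step is not free: choosing $v$ large to rotate $x$ simultaneously activates the quadratic-in-$v$ part of $\phi(v)$, so ``pure'' rotations are unavailable. I would handle this by alternating $v$ between opposite values (so that the odd-in-$v$ parts of $\phi$ cancel on average) combined with a small-amplitude/long-time rotation regime (so that the residual quadratic contribution stays bounded). The small error that remains between this approximate strategy and the exact target $x_1$ is then absorbed using the accessibility established above, which makes every reached point an interior point of the reachable set; an open--closed connectedness argument on the reachable set then yields that it coincides with $\R^3$.
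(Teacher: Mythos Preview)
Your accessibility argument is sound (and in fact exact rather than asymptotic: for any $\varepsilon\ne0$ the finite linear combination $g(\cdot,\varepsilon e_*)+g(\cdot,-\varepsilon e_*)-2g(\cdot,0)$ already equals the constant field $-2\varepsilon^2\,e_*\times\tilde Be_*$, so this vector lies directly in the Lie algebra generated by $\{g(\cdot,v)\}$). The global reachability step, however, has a genuine gap. The reorientation obstacle you flag is real and neither of your fixes resolves it: alternating $v$ between $\pm v_0$ cancels the odd part $Mv$ of $\phi$, but the rotation term $-v\times x$ is equally odd in $v$ and cancels too, so no net rotation is achieved; in the small-amplitude/long-time regime $v=\varepsilon w$, $T\sim1/\varepsilon$, the quadratic contribution is indeed $O(\varepsilon)$ but the drift $\tilde A_{11}x$ now acts for a diverging time and destroys any control on $|x|$, while the large-amplitude/short-time regime blows up the quadratic term. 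Your closing step is also unjustified: the bracket-generating property gives only that the reachable set has nonempty interior (Krener), not that every reached point is interior, and you give no argument for closedness, so the open--closed argument does not go through as stated.

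The paper bypasses all of this by exploiting the \emph{affine} structure of \eqref{syst3} and invoking the Jurdjevic--Sallet theorem: a family $\F=\{x\mapsto C(v)x+c(v)\}$ of affine vector fields on $\R^n$ is controllable as soon as (i) the family of linear parts $\{C(v)\}$ is controllable on $\R^n\smallsetminus\{0\}$ and (ii) $\F$ has no common fixed point. For (i), one checks that the closed convex cone generated by $\{C(v)=\tilde A_{11}-S(v):v\in\R^3\}$ coincides with that generated by $\{\tilde A_{11}^s\}\cup\{S(w):w\in\R^3\}$; the skew fields $S(w)$ act transitively on each sphere, and the hypothesis that $\tilde A_{11}^s$ has eigenvalues of both signs supplies radial motion inward and outward. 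For (ii), a common fixed point would force $\tilde Bv\times v=\tilde BJ^{-1}(Jv\times v)$ for all $v$, which is violated at any eigenvector of $J$ that is not an eigenvector of $\tilde B$. Thus in the paper the eigenvector hypothesis enters not to manufacture a constant direction in the Lie algebra, but solely to exclude a common equilibrium of the affine family.
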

For the proof of this lemma we shall use the following result, proven in  \cite{JurSal}, about the controllability of a family of affine vector fields in $\R^n$. An {\it affine vector field} $X$ is a mapping from $\R^n$ to $\R^n$ of the form $X:x \mapsto C x +c$ where $C\in\M_{n\times n}$
and $c$ is a constant vector 
in $\R^n$; the {\it linear part} of $X$, denoted by $\ooverrightarrow{X}$, is the linear vector field $x\mapsto Cx$. If $\mathscr{F}$ is a family of affine vector fields, we denote by $\ooverrightarrow{\mathscr{F}}$ the set of linear parts of the vector fields in $\mathscr{F}$ and we say that $\mathscr{F}$ has no fixed point if there does not exist a point $x_0\in\R^n$ such that $X(x_0)=0$ for every $X\in\mathscr{F}$.
\begin{thm}[Jurdjevic and Sallet]
Let $\mathscr{F}
$ be a family of affine vector fields given on $\R^n$. Assume that ${\mathscr F}$ has no
    fixed point. If $\ooverrightarrow {\mathscr F}$ is controllable on $\R^n\smallsetminus\{0\}$
    then ${\mathscr F}$ is controllable on $\R^n$.
\end{thm}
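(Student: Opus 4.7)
The plan is to combine three ingredients: bracket-generation to secure accessibility, a rescaling argument that transfers controllability information from $\ooverrightarrow{\mathscr F}$ to $\mathscr F$, and the no-fixed-point hypothesis to handle trajectories near the origin.

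First I would check that $\mathscr F$ is Lie-bracket generating on all of $\R^n$. The bracket of two affine fields $X(x)=Cx+c$ and $Y(x)=Dx+d$ is the affine field $[X,Y](x)=(DC-CD)x+(Dc-Cd)$, whose linear part equals $[\ooverrightarrow{X},\ooverrightarrow{Y}]$; inducting on bracket length one obtains that the Lie algebra generated by $\mathscr F$ consists of affine fields whose linear parts form precisely the Lie algebra generated by $\ooverrightarrow{\mathscr F}$. Controllability of $\ooverrightarrow{\mathscr F}$ on $\R^n\smallsetminus\{0\}$ implies that the latter evaluated at any $x\ne 0$ spans $\R^n$, whence the same holds for the Lie algebra of $\mathscr F$. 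At $x=0$, the no-fixed-point hypothesis supplies some $X_*\in\mathscr F$ with $X_*(0)\ne 0$; a short positive-time flow along $X_*$ moves the origin to a nonzero point at which bracket-generation already holds, and transports the property back by continuity. Krener's theorem then yields $\Int\caA(x_0)\ne\emptyset$ for every $x_0\in\R^n$.

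The heart of the argument is to upgrade accessibility to exact controllability. I would exploit the rescaling $y(t)=x(t)/\lambda$, which converts $\dot x=Cx+c$ into $\dot y=Cy+c/\lambda$ and, as $\lambda\to\infty$, turns trajectories of $\mathscr F$ starting at $\lambda y_0$ into approximations (uniformly on compact time intervals) of trajectories of $\ooverrightarrow{\mathscr F}$ starting at $y_0$. Together with controllability of $\ooverrightarrow{\mathscr F}$ on $\R^n\smallsetminus\{0\}$, this allows one to steer in the $\mathscr F$-system from $\lambda y_0$ to within distance $\varepsilon\lambda$ of $\lambda y_1$ for any prescribed $\varepsilon>0$ and any nonzero $y_0,y_1$. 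The open attainable sets from the first step would then absorb the remaining error, and a short flow along $X_*$ would bridge trajectories passing through or near $0$, yielding $\caA(x_0)=\R^n$.

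The principal obstacle is making this transfer step precise: the rescaling only connects pairs of the form $(\lambda y_0,\lambda y_1)$ asymptotically, whereas controllability requires exact connections between arbitrary source and target. A natural route is to argue semigroup-theoretically: show that the semigroup generated by the flows of $\mathscr F$ contains the group of affine transformations whose linear part lies in the (necessarily transitive) group generated by $\ooverrightarrow{\mathscr F}$, and use the no-fixed-point hypothesis to produce nontrivial translations sitting above the identity linear part. I expect this to be where the real difficulty lies, with the no-fixed-point condition acting precisely to rule out proper invariant affine subspaces that would otherwise obstruct controllability.
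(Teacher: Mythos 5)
The paper does not prove this statement: it is quoted from \cite{JurSal}, so there is no internal proof to compare yours with; the closest material is the proof of Lemma~\ref{contr2}, where the authors adapt the Jurdjevic--Sallet machinery to a family whose linear part is \emph{not} controllable, and that proof contains exactly the ingredients your outline lacks. The decisive gap is your second step, which you yourself flag as ``where the real difficulty lies'': it is precisely the content of the theorem, and the sketch does not close it. The rescaling $y=x/\lambda$ only yields approximate steering between points of the form $\lambda y_0,\lambda y_1$ with an error of size $\varepsilon\lambda$, and ``the open attainable sets from the first step would then absorb the remaining error'' is not an argument --- $\Int\caA(x_0)\neq\emptyset$ tells you nothing about \emph{which} open set you reach, nor that $\caA(x_0)$ itself is open. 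The standard repair (and the one carried out in Lemma~\ref{contr2}) is: (i) normal accessibility of a whole sphere for the recentred linear family $\{\ooverrightarrow{X}(\cdot-w)\}$; (ii) Sussmann's perturbation lemma, which shows that normal accessibility survives the small perturbation $\ooverrightarrow{X}(\cdot-w)+\lambda X(w)$ obtained by zooming \emph{in} at an arbitrary $w$ (here the no-fixed-point hypothesis and connectedness of attainable sets guarantee that the trajectory from $w$ actually meets the relevant sphere); (iii) the conclusion that every $\caA(x,\mathscr F)$ and every $\caA(x,-\mathscr F)$ is \emph{open}; and (iv) the open-and-closed argument on the connected space $\R^n$. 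Your proposed fallback --- showing that the system semigroup contains a transitive group of affine transformations --- is not a viable substitute: the semigroup generated by forward flows of a family of vector fields need not contain any group, and controllability here must be extracted topologically from the attainable sets.

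There is also a genuine error in your first step. From the fact that the linear-part map sends $\mathrm{Lie}(\mathscr F)$ \emph{onto} $\mathrm{Lie}(\ooverrightarrow{\mathscr F})$ you cannot conclude that $\mathrm{Lie}(\mathscr F)(x)=\R^n$ for all $x\neq 0$: the evaluation of an affine field at $x$ is $Lx+l$, and the constant parts can conspire so that the image of the evaluation map is a proper subspace at some $x$ even though the vectors $Lx$ alone span $\R^n$; absence of a \emph{common} fixed point only rules out that this image is $\{0\}$. Full rank of $\mathrm{Lie}(\mathscr F)$ at every point is in fact not needed for the theorem, but as written your appeal to Krener's theorem at an arbitrary $x_0$ is unjustified, so even the accessibility half of your plan is incomplete.
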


\begin{proof}[Proof of Lemma~\ref{contr1}]
From Lemma~\ref{propcontr3}, we know that it is sufficient to prove the controllability of the system 
defined by the family of affine vector fields 
$
\mathscr{F}=\{\,C(v)x+c(v)\ |\ v\in\R^3\,\}
$
 where
\begin{align*}
C(v) & =\tilde A_{11} - S(v), & c(v) &=(\tilde A_{11}\tilde B+A_{12} - \tilde B A_{22})v -v\times\tilde Bv - \tilde B J^{-1}(J v\times v).
\end{align*}

Choose $v_0$ in such a way that $S(v_0) = (\tilde A_{11} - \tilde A_{11}^\mathrm{T} )/2$. Then 
$$\tilde A_{11} - S(v_0)=\frac{\tilde A_{11} + \tilde A_{11}^\mathrm{T}}2.$$

Define the family of linear vector fields $\mathscr{G}=\{\,\tilde A_{11}-S(v_0),S(v)\mid v\in\R^3\,\}$.
The two closed convex cones generated by the families $\ooverrightarrow{\mathscr{F}}$ and $\mathscr{G}$ are identical, since for every finite family $(\alpha_i)_{0\le i\le N}$  of positive numbers
and every choice of $v_1,\dots,v_N\in \R^3$,
\begin{align*}
\alpha_0(\tilde A_{11} - S(v_0)) + \sum_{i=1}^N\alpha_i S(v_i)  & = \alpha_0(\tilde A_{11} - S(v_0))  + 
\lim_{r\to+\infty}\frac1r\sum_{i=1}^N\alpha_i\bigl(\tilde A_{11} - S(-r\,v_i)\bigr)
\end{align*}
and 
\begin{align*}
\sum_{i=1}^N\alpha_i(\tilde A_{11} - S(v_i)) & = \left(\sum_{i=1}^N\alpha_i\right)
\bigl(\tilde A_{11}-S(v_0)\bigr)+ \sum_{i=1}^N\alpha_iS(v_0-v_i).
\end{align*}
 Therefore, 
the controllability of $\Lambda(\mathscr{F})$ on $\R^3\smallsetminus\{0\}$ is equivalent to the one of $\mathscr G$ (see \cite{JK1,JK2}). 

The trajectories of
the vector field $S(v)$ are circles contained in planes orthogonal to $v$ and whose centers are at the intersections of these planes with the line $\R v$. Let $x\in\R^3\smallsetminus\{0\}$; thanks to the vector fields $S(v)$, the attainable set from $x$ for $\mathscr{G}$ contains the sphere of center $\0_3$ passing through $x$. 
Now, as $\tilde A_{11}-S(v_0)$ has two eigenvalues with opposite sign, we can move (thanks to this vector field) along a direction towards the origin and 
along an half-line exiting the sphere and going to infinity.   
Therefore, using again the fact that the family $\{S(v)\mid v\in\R^3\}$ is transitive on 
every sphere, we proved that $\mathscr{G}$, and thus  $\ooverrightarrow{\mathscr F}$, 
is controllable on  $\R^3\smallsetminus\{0\}$.

Assume now that we can find a fixed point, denoted by $x_0$, which is common to all the vector fields in $\mathscr F$. For every vector $v\in\R^3$, we have 
\[
C(v)x_0 + c(v) = C(-v)x_0 + c(-v)=\0_3
\]
from which we deduce 
\be\label{fine}
\tilde A_{11}x_0- v\times\tilde Bv - \tilde BJ^{-1}(Jv\times v) = 0.
\ee
Taking $v=\0_3$ gives $\tilde A_{11}x_0=\0_3$ 
and therefore \r{fine} can be rewritten as
\begin{equation}
\label{valpropBJ}
\tilde B v\times v = \tilde BJ^{-1}(Jv\times v) 
\end{equation}
for every $v\in\R^3$. 
In particular, if $v$ is an 
eigenvector of 
$J$ but not of $\tilde B$, then 
the right-hand side of \r{valpropBJ} is equal to zero, while the 
right-hand side is not, leading to a contradiction. 
\end{proof}

We are left to deal with the case where the 
eigenvalues  of 
$$\tilde A_{11}^s=\frac{\tilde A_{11}+ \tilde A_{11}^\mathrm{T}}2,$$
the
symmetric part of $\tilde A_{11}$, are all positive or all negative. 
Although in this case the linear part of the family $\mathscr F$ is not controllable on $\R^3\smallsetminus\{0\}$, 
we can 
nevertheless adapt the method introduced in 
\cite{JurSal} in order to prove the controllability of \r{syst3}.

\begin{lemma}
\label{contr2}
Assume that the matrix $\tilde A_{11}^s$ is invertible and that its eigenvalues  
have all the same sign. 
Then there exists an open and dense subset $O$
of $\{(B,J)\in \M_{6\times 3}\times \M_{3\times 3}\mid J=\T{J}>0\}$ 
such that system \eqref{syst3} is controllable if $(B,J)\in O$.
  More precisely, if $B_2$ is invertible and if
at least one eigenvector  of $J$ is not an eigenvector of 
$\tilde B$, then system~\eqref{syst3} is controllable.
\end{lemma}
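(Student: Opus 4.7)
The plan is to adapt the Jurdjevic--Sallet strategy of Lemma~\ref{contr1}. By Lemma~\ref{propcontr3} it suffices to show that the affine family $\mathscr{F}=\{X_v\mid v\in\R^3\}$, with $X_v(x)=(\tilde A_{11}-S(v))x+c(v)$, is controllable on $\R^3$. Without loss of generality I would assume $\As$ is negative definite, the positive definite case following by time reversal of \eqref{syst3}.

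I would first recycle verbatim the no-common-fixed-point computation from the proof of Lemma~\ref{contr1}: the identity $(\tilde Bv)\times v=\tilde BJ^{-1}(Jv\times v)$, forced on any common fixed point, is algebraic and uses only the hypothesis that some eigenvector of $J$ fails to be an eigenvector of $\tilde B$. Next, by the convex-cone equivalence established inside the proof of Lemma~\ref{contr1}, the linear parts $\ooverrightarrow{\mathscr{F}}$ and the family $\mathscr{G}=\{\As\}\cup\{S(v)\mid v\in\R^3\}$ share the same closed convex cone, and so the same attainable sets on $\R^3\smallsetminus\{0\}$. Here however $\mathscr{G}$ fails to be controllable on $\R^3\smallsetminus\{0\}$: since $\As$ is negative definite, $|x|^2$ strictly decreases along the flow of $\As$, while the rotations $S(v)$ preserve radii; hence from $x_0\in\R^3\smallsetminus\{0\}$ the linear family alone reaches exactly $\{y\mid 0<|y|\le|x_0|\}$.

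The key new ingredient is to extract outward motion from the constant term $c(v)$. Writing $c(v)=Lv+q(v)$ with $L=\tilde A_{11}\tilde B+A_{12}-\tilde BA_{22}$ and $q(v)=-v\times\tilde Bv-\tilde BJ^{-1}(Jv\times v)$, the time rescaling $\tau=\lambda^2t$ yields $\lambda^{-2}X_{\lambda w}\to q(w)$ uniformly on compact subsets of $\R^3$ as $\lambda\to+\infty$. Continuity of the endpoint map (Section~\ref{nots}) then places in $\Cl(\caA(x_0))$ the half-line $\{x_0+tq(w)\mid t\ge 0\}$ for every $w\in\R^3$. Choosing $w_0$ to be an eigenvector of $J$ that is not an eigenvector of $\tilde B$, we have $Jw_0\times w_0=0$ while $w_0\times\tilde Bw_0\neq 0$, so $q(w_0)\neq 0$. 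Concatenating a rotation on the sphere of radius $|x_0|$ to align $x_0$ with $q(w_0)$, a drift along $q(w_0)$ to cross onto arbitrarily large spheres, a contraction along $\As$ down to the sphere of radius $|x_1|$, and a final rotation to $x_1$ shows that $x_1\in\Cl(\caA(x_0))$ for every $x_0,x_1\in\R^3$.

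To upgrade density to exact controllability I would verify the Lie-bracket generating condition for $\mathscr{F}$ under the present hypotheses by a computation parallel to that of Proposition~\ref{p-a}; Krener's theorem then gives $\caA(x_0)$ nonempty interior, and density together with the absence of a common fixed point yields $x_1\in\caA(x_0)$ via standard accessibility arguments. The delicate points will be the time-rescaling limit -- rigorously justifying that large controls produce in $\Cl(\caA(x_0))$ the constant drifts $q(w)$ -- and the final upgrade from approximate to exact controllability; both require care with continuity of the endpoint map and with the verification of the Lie-bracket generating condition under the specific hypotheses of the lemma.
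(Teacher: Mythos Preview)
Your extraction of the constant drift $q(w_0)$ via the rescaling $\lambda^{-2}X_{\lambda w}\to q(w)$ is correct and coincides with the paper's Lemma~\ref{unbondedness}; so does the no-common-fixed-point computation. The gap is in the density step. The rotations $S(v)$ and the pure contraction $x\mapsto\As x$ lie in the closed convex cone of the \emph{linear} family $\ooverrightarrow{\F}$, not of $\F$ itself: since $c(\lambda w)=\lambda Lw+\lambda^2 q(w)$, the rescaling $\lambda^{-1}X_{\lambda w}(x)=\lambda^{-1}\tilde A_{11}x-S(w)x+Lw+\lambda q(w)$ diverges whenever $q(w)\ne 0$, so pure rotations are not recoverable as limits of rescaled elements of $\F$. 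The concatenation ``rotate on the sphere, drift along $q(w_0)$, contract with $\As$, rotate again'' is therefore not a limit of $\F$-trajectories, and the inclusion $x_1\in\Cl(\A(x_0,\F))$ is unjustified. Note also that the contraction actually available in $\F$ is the affine field $X_{v_0}(x)=\As x+c(v_0)$, which attracts to $-(\As)^{-1}c(v_0)$ rather than to the origin, so it does not carry you between origin-centred spheres either.

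The paper bridges $\F$ and $\ooverrightarrow{\F}$ by a different device, the affine dilations $h_{\lambda,w}(x)=w+\lambda(x-w)$: one has $(h_{\lambda,w})_*X_v(x)=\ooverrightarrow{X_v}(x-w)+\lambda X_v(w)\to\ooverrightarrow{X_v}(x-w)$ as $\lambda\to 0$, uniformly on compacta. A preparatory lemma (Lemma~\ref{spheres}) extracts a \emph{finite} subfamily of $\ooverrightarrow{\F}$ for which points on a sphere of some radius $r$ are \emph{normally} accessible from every point of the unit sphere; normal accessibility being stable under $C^0$ perturbation of a finite family, this transfers to $\F_{\lambda,w}=(h_{\lambda,w})_*\F$ for small $\lambda>0$. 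Together with unboundedness of $\A(\cdot,\pm\F)$ (your drift argument) and arc-connectedness of attainable sets, this yields that $\A(w,\F_{\lambda,w})$, and hence $\A(w,\F)$ since $h_{\lambda,w}$ fixes $w$, contains a neighbourhood of $w$ for every $w$. Every attainable set for $\F$ and for $-\F$ is then open, whence each is also closed, giving controllability with no separate Lie-bracket verification for \eqref{syst3}.
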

Notice that, together with Lemma~\ref{contr1},  Lemma~\ref{contr2} concludes the proof of 
Proposition~\ref{Th1}. In order to show Lemma~\ref{contr2}  we need   
some definitions and have to
prove some intermediate results.

Let $\mathscr{F}$ and $\LL{(\mathscr{F})}$ be
defined   as above.
In the proof of Lemma~\ref{contr1} 
we pointed out that starting form a point $x\in\R^3$ and following all possible 
linear vector fields of the type $S(v)$ one can attain 
the entire sphere of center $0_3$ and radius $\|x\|$. 
 If the eigenvalues of $\As$ are negative (resp. positive), the vector field $\As$ points towards the interior (resp. the exterior) of this sphere, from which we deduce that 
$$\A(x,\LL{(\mathscr{F})})=\left\{\ba{ll}
\{x\}\cup \{y\in\R^3\mid 0<\|y\|<\|x\|\}&\mbox{ if }\As<0,\\
\{x\}\cup \{y\in\R^3\mid \|y\|>\|x\|\}&\mbox{ if }\As>0.
\ea\right.
$$ 
 In the sequel, we shall need the notion of normal accessibility, 
 which is recalled below.

\begin{deff}
Let $\mathscr V$ be a family of complete vector fields on a manifold $M$. The point $y$ is said to be normally $\mathscr V$-accessible from $x$ if there exist $X^1,\dots, X^p$ in $\mathscr V$ and 
$t_1,\dots,t_p >0$ such that 
\[
y = e^{t_1 X^1}\circ\dots\circ e^{t_p X^p}(x)
\]
and the mapping 
\[
(\tau_1,\dots,\tau_p) \longmapsto e^{\tau_1 X^1}\circ\dots\circ e^{\tau_p X^p}(x),
\]
defined in a neighborhood of $(t_1,\dots, t_p)$, is of rank equal to $\dim M$ at $(t_1,\dots,t_p)$.
\end{deff}

In \cite{JurSal} the authors exploit the fact that if a family of vector fields is controllable, then every point $y$ is normally accessible from every point $x$. As the family $\ooverrightarrow{\mathscr F}$   is 
not  controllable on $\R^3\smallsetminus\{0\}$, 
we are lead 
to prove directly the following  normal accessibility property.
 
\begin{lemma}\label{spheres}
Assume that the eigenvalues of  $\tilde A_{11}+\T{\tilde A_{11}}$ have all the same sign. 
Then we can extract
from $\F$ a finite family ${\F^0}$ such that there exists a sphere of center $\0_3$ and radius $r$ of points normally $\ooverrightarrow{\F^0}$-accessible from every point of the unit sphere. 
\end{lemma}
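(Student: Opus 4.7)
The plan is to exhibit an explicit finite family $\F^0 \subset \F$ of four affine vector fields, verify that $\ooverrightarrow{\F^0}$ is Lie-bracket generating off the origin, and then combine Krener-type normal accessibility with the strict monotonicity of $\|x(t)\|$ under every flow in $\ooverrightarrow{\F^0}$ to produce a common sphere of normally accessible points reached from every unit vector.

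Concretely, I would set $v_0=0_3$ and pick $v_1,v_2,v_3$ linearly independent in $\R^3$; denote by $X_i\in\F$ the affine vector field with parameter $v_i$, and take $\F^0=\{X_0,X_1,X_2,X_3\}$. The Lie algebra of vector fields generated by $\ooverrightarrow{\F^0}$ is closed under linear combinations and therefore contains $\ooverrightarrow{X_0}-\ooverrightarrow{X_i}=S(v_i)$ for $i=1,2,3$, hence every skew-symmetric linear vector field $x\mapsto S(w)x$. Evaluated at any $x\ne 0_3$, this gives the $2$-plane $x^\perp$; adjoining $\ooverrightarrow{X_0}(x)=\tilde A_{11}x$, whose radial component $\ps{x}{\As x}/\|x\|$ is nonzero by sign-definiteness of $\As$, completes a basis of $\R^3$. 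Hence $\ooverrightarrow{\F^0}$ is Lie-bracket generating on $\R^3\smallsetminus\{0_3\}$.

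Assume without loss of generality that $\As$ is negative definite. Then $\frac{d}{dt}\|x(t)\|^2=2\ps{x(t)}{\As x(t)}<0$ along every flow of every field in $\ooverrightarrow{\F^0}$, with uniform exponential bounds $e^{-Mt}\le\|x(t)\|/\|x(0)\|\le e^{-\mu t}$ where $M>\mu>0$ are controlled by the eigenvalues of $\As$. Consequently every trajectory starting on $\caS^2$ stays in the open unit ball; by Krener's theorem \cite[Theorem 1 p.~66]{Jur} the attainable set $\caA(x,\ooverrightarrow{\F^0})$ has nonempty interior, and by the standard density result for normally accessible points in Lie-bracket generating families, the set $N(x)$ of points normally $\ooverrightarrow{\F^0}$-accessible from $x$ is open, nonempty and dense in $\Int(\caA(x,\ooverrightarrow{\F^0}))$.

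To produce a single $r\in(0,1)$ with $r\,\caS^2\subset N(x)$ for every $x\in\caS^2$, I would study the four-parameter endpoint map $\Psi_x(t_0,\ldots,t_3)=e^{t_3\ooverrightarrow{X_3}}\circ\cdots\circ e^{t_0\ooverrightarrow{X_0}}(x)$. By Lie-bracket generation it is a submersion at generic positive parameter values; varying the ``rotation-like'' parameters while constraining the total time sweeps out, from each $x$, an open piece of a specific sphere whose radius is pinned down (up to a controlled interval) by the exponential contraction bounds above. Continuous dependence on $x$ and compactness of $\caS^2$ then allow the selection of $r$ and finitely many such concatenations whose endpoint images jointly cover $r\,\caS^2$ from every $x\in\caS^2$. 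The main technical obstacle is exactly this uniform covering step, which requires combining the monotone radial contraction (for uniformity in $r$) with the $\spann\{S(v_1),S(v_2),S(v_3)\}=\mathfrak{so}(3)$-transitivity on spheres (for the two-dimensional sweeping), while preserving the full-rank property of $\Psi_x$ throughout all concatenations and uniformly over $\caS^2$.
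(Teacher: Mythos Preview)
Your Lie-bracket computation and the radial monotonicity $\frac{d}{dt}\|x\|^2=2\ps{x}{\As x}$ are fine, but the ``uniform covering step'' you flag is a genuine gap, and your proposed route through direct analysis of the four-parameter endpoint map is not carried out. Knowing $\|x(T)\|\in[e^{-MT},e^{-\mu T}]$ only confines the endpoint to an annulus, not a sphere; and the claim that the angular variables sweep out all of $r\,\caS^2$ while the endpoint map retains full rank, uniformly over $x\in\caS^2$, is precisely the hard part and is nowhere established. Your choice $v_0=0_3$ with arbitrary fixed $v_1,v_2,v_3$ also does not obviously make the family $\ooverrightarrow{\F^0}$ itself transitive on directions: the skew parts $S(v_i)$ are not scaled, so the rotational behaviour may be dominated by $\As$, and Lie-bracket generation alone says nothing about which points are actually attainable by concatenating the four flows.

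The paper avoids this difficulty by a different choice of $\F^0$ and a different argument. It takes $v_0$ so that $C(v_0)=\As$, picks an eigenvector $w_1$ of $\As$, and uses $\As-k_jS(w_j)$ with $w_2,w_3\perp w_1$ and $k_j$ \emph{large}; the large $k_j$ force a pair of non-real eigenvalues, so the projected flows on $\caS^2$ make the family itself transitive on directions. Combined with the pure radial motion along $\R w_1$, this yields $\A(x,\ooverrightarrow{\F^0})\supset B(0,r_x)\smallsetminus\{0\}$ (or its complement) for every $x\ne 0_3$. The crucial step you are missing is then the elementary propagation property: once Krener's theorem produces a single normally accessible point $y$ from $x$, every point of $\A(y,\ooverrightarrow{\F^0})$ is normally accessible from $x$, and from every point near $x$. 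Hence a whole punctured ball is normally accessible from a neighbourhood of $x$, and compactness of $\caS^2$ gives a common radius $r$ without any delicate endpoint-map covering argument.
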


\begin{proof} 
Let $v_0\in\R^3$ be such that $\As=\tilde A_{11}-S(v_0)=\As$.
Let, moreover, $w_1$ be an  eigenvector of $\As$ and take $w_2,w_3\in \R^3$ such that the family $(w_1,w_2,w_3)$ is an orthogonal basis of $\R^3$. Consider now the family 
\[
\ooverrightarrow{\F^0}=\{\,\As, \As- k_2 S(w_2),\As-k_3 S(w_3)\,\}
\]
 extracted from $\ooverrightarrow{\F}$. 
 
 Since $S(w_2)$ and $S(w_3)$ are nonzero skew-symmetric  matrices, each of their spectra contains the value zero and two
 nonzero purely imaginary eigenvalues. 
Notice that  
\[
\lim_{k_j\to+\infty}  \frac{\tilde A_{11} -k_jS(w_j)}{k_j} = -S(w_j).
\]
As the roots of a characteristic polynomial depend continuously on the coefficients of the related matrix, it follows that for $k_2$ and $k_3$ large enough $\As- k_2 S(w_2)$ and $\As-k_3 S(w_3)$  have a pair of non-real eigenvalues. By projecting the vector fields $\As-k_2S(w_2)$ and $\As - k_3S(w_3)$ on the unit sphere, it is easy to 
 see that the family  $\ooverrightarrow{\F^0}$ is transitive on the directions, that is, from any half-line starting from the origin, one can reach any other half-line. Moreover, thanks to the eigendirection $w_1$ of the vector field $\As$ we can go as far as we want (case $\As>0$) or as close as we want to the origin (case $\As<0$). 
Hence,
for every $x\in 
 \R^3\smallsetminus\{\0_3\}$,  
 $\A(x,\ooverrightarrow{\F^0})$ contains a set of the form $\R^3\smallsetminus B(0,r)$ (case $\As>0$) or $B(0,r)\smallsetminus\{\0_3\}$ (case $\As<0$). The interior of 
 $\A(x,\ooverrightarrow{\F^0})$ being nonempty, there exists at least one point, denoted by $y$, that can be normally $\ooverrightarrow{\F^0}$-accessed from $x$ (see \cite{Sus}). Notice that
since $y$ is normally $\ooverrightarrow{\F^0}$-accessible from $x$, then $y$ is also normally $\ooverrightarrow{\F^0}$-accessible from every point in a (sufficiently small) neighborhood of $x$. Moreover, 
all the points in $\A(y,\ooverrightarrow{\F^0})$ are normally $\ooverrightarrow{\F^0}$-accessible from $x$.

We conclude by using the compactness of the unit sphere: for every $x\in \caS^2$, there exists a neighborhood $V_x$ of $x$ and a set $\R^3\smallsetminus B(0,r_x)$ (case $\As>0$) or $B(0,r_x)\smallsetminus\{\0_3\}$ (case $\As<0$) whose points are normally $\ooverrightarrow{\F^0}$-accessible from every point of $V_x$. As we can include the unit sphere in a finite union of neighborhoods $V_x$, we can claim the existence of a sphere of radius $r$ whose points are  normally $\ooverrightarrow{\F^0}$-accessible from every point of the unit sphere.
\end{proof}

The following lemma 
guarantees 
the unboundedness of the sets $\A(x,\F)$ and $\A(x,-\F)$.

\begin{lemma}\label{unbondedness}
If $B_2$ is invertible and if
at least one eigenvector  of $J$ is not an eigenvector of 
$\tilde B$, then for every $x\in\R^3$ both sets $\A(x,\F)$ and $\A(x,-\F)$ are unbounded.
\end{lemma}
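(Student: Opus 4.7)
The plan is to select a one-parameter family of constant controls $v=\lambda v_*$, $\lambda\in\R$, for which the corresponding affine vector field in $\F$ admits an equilibrium whose norm diverges with $\lambda$, and then to exploit the sign structure of the eigenvalues of $\As$, inherited from the hypotheses of Lemma~\ref{contr2}, to conclude that trajectories started at any $x\in\R^3$ reach points of arbitrarily large norm.

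First, pick $v_*\in\R^3$ as an eigenvector of $J$ that is not an eigenvector of $\tilde B$. Then $Jv_*\times v_*=0$, so the term $\tilde BJ^{-1}(Jv_*\times v_*)$ vanishes in the expression of $c(\lambda v_*)$, while $v_*\times\tilde B v_*\neq 0$ because $v_*$ is not an eigenvector of $\tilde B$. Setting $d_1=(\tilde A_{11}\tilde B+A_{12}-\tilde B A_{22})v_*$, the constant control $v=\lambda v_*$ produces the affine vector field $x\mapsto C(\lambda v_*)x+c(\lambda v_*)$ with
\[
C(\lambda v_*)=\tilde A_{11}-\lambda S(v_*), \qquad c(\lambda v_*)=\lambda d_1-\lambda^2\,v_*\times\tilde B v_*.
\]
The symmetric part of $C(\lambda v_*)$ is $\As$, which is definite by assumption; hence $C(\lambda v_*)$ is invertible for every $\lambda$, since $C(\lambda v_*)y=0$ would give $\ps{y}{\As y}=0$. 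Let $x_\lambda=-C(\lambda v_*)^{-1}c(\lambda v_*)$ be the unique equilibrium.

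The heart of the proof is the claim that $\|x_\lambda\|\to+\infty$ as $\lambda\to+\infty$. Arguing by contradiction, if some sequence $\lambda_n\to+\infty$ yielded a bounded sequence $(x_{\lambda_n})$, then dividing the identity $(\tilde A_{11}-\lambda_n S(v_*))x_{\lambda_n}=-\lambda_n d_1+\lambda_n^2\,v_*\times\tilde B v_*$ by $\lambda_n^2$ and letting $n\to\infty$ would give $0=v_*\times\tilde B v_*$, contradicting the choice of $v_*$.

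To conclude, the trajectory starting at $x$ under the constant control $v=\lambda v_*$ reads $x(t)=e^{tC(\lambda v_*)}(x-x_\lambda)+x_\lambda$. Since all eigenvalues of $\As$ share the same sign, $C(\lambda v_*)$ is either Hurwitz (then $x(t)\to x_\lambda$, and taking $\lambda$ large shows that $\A(x,\F)$ contains points of arbitrary norm) or has spectrum in the open right half-plane (then $\|x(t)\|\to+\infty$ as soon as $x\neq x_\lambda$, a condition met for $\lambda$ large). In both cases $\A(x,\F)$ is unbounded. The family $-\F$ shares the same equilibria $x_\lambda$ while the two sign regimes simply interchange, so the identical argument applies to $\A(x,-\F)$. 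The main obstacle is the divergence $\|x_\lambda\|\to\infty$, which rests crucially on the nonvanishing of $v_*\times\tilde B v_*$ guaranteed by the hypothesis.
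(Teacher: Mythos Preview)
Your argument is internally sound but it imports a hypothesis that the lemma does not assume: the definiteness of $\As$. You use it twice --- first to conclude that $C(\lambda v_*)$ is invertible for every $\lambda$, and then to run the Hurwitz/anti-Hurwitz dichotomy in the last paragraph. That hypothesis is indeed available in Lemma~\ref{contr2}, where the present lemma is applied, so your weakened version still suffices for the paper's purposes; but as a proof of Lemma~\ref{unbondedness} as stated it is incomplete.

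The paper's proof is different and does not touch the spectrum of $C(v)$ at all. It takes any $v$ with $q(v)=-v\times\tilde Bv-\tilde BJ^{-1}(Jv\times v)\neq 0$ (your $v_*$ works) and observes that
\[
\frac{C(\alpha v)x+c(\alpha v)}{\alpha^{2}}\longrightarrow q(v)\qquad(\alpha\to\infty),
\]
so on the interval $[0,T\alpha^{-2}]$ the solution of $\dot x=C(\alpha v)x+c(\alpha v)$ approximates the solution of the constant-drift equation $\dot x=q(v)$ on $[0,T]$. Since a nonzero constant drift carries any point out of every bounded set in both forward and backward time, unboundedness of $\A(x,\F)$ and $\A(x,-\F)$ follows immediately. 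This rescaling argument is shorter, needs no equilibrium computation and no stability analysis, and proves the lemma in the full generality in which it is stated. Your equilibrium-tracking approach is a legitimate alternative when $\As$ is definite, but it narrows the result without gaining anything in return.
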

\begin{proof}
Take $v\in\R^3$ such that $q(v)= -v\times \tilde Bv - \tilde B J^{-1}(Jv\times v)\neq \0_3$; this is possible since at least one eigenvector  of $J$ is not an eigenvector of $\tilde B$. 
For every $\alpha\in\R$ the vector field
$x\mapsto C(\al v)x+c(\al v)$ is in  $\F$ and 
\[
\lim_{\alpha\to\infty}\frac{C(\al v)x+c(\al v)}{\al^2}=q(v).
\]
This proves that, given a time $T>0$, the solution of $\dot x=C(\al v)x+c(\al v)$ on the interval $[-T\al^{-2},T\al^{-2}]$ is as  close as we want to the solution of  $\dot x=q(v)$ on $[-T,T]$ (with the same initial condition) 
provided that $\alpha$ is large enough. 
Now, the solution of $\dot x = q(v)$, $x(0)=x_0$,  
leaves any fixed bounded set both in time $T$ and in time $-T$, provided 
that $T$ is large enough. 
\end{proof}

We consider now, as in \cite{JurSal}, the  
family $\{h_{\lambda,w}\mid \lb>0,\ w\in\R^n\}$ of affine diffeomorphisms of $\R^n$  
 defined by $h_{\lambda,w}(x) =w+ \lambda(x-w)$. 
 An easy computation shows that,
 for every $X\in\F$, 
\[
({h_{\lambda,w}}_*\,X)(x) = \ooverrightarrow{X}(x-w) + \lambda\,X(w)
\]
where ${h_{\lambda,w}}_* X$ denotes the pushforward  of the vector field $X$ by $h_{\lambda,w}$. In particular 
\begin{equation}
\label{lim}
\lim_{\lambda\to0}({h_{\lambda,w}}_*\,X)(x) = \ooverrightarrow{X}(x-w).
\end{equation}
Denote by $\F_{\lambda,w}$ the image of the family $\F$ by ${h_{\lambda,w}}_*$, that is, the transformation of $\F$  under the change of coordinates $h_{\lambda,w}$.

\begin{proof}[Proof of Lemma~\ref{contr2}]
Let $\ooverrightarrow{\F^0}$ and $r$ be as in the statement of 
Lemma \ref{spheres}. 
Consider the finite family $\ooverrightarrow{\F^0_{w}}= \{\,\ooverrightarrow{X}(x-w)\mid {X}\in{\F^0}\,\}$ and notice that
each point of the sphere of center $w$ and radius $r$ is normally $\ooverrightarrow{\F^0_w}$-accessible from every point of the sphere of center $w$ and radius 1.
Let 
%
%
$\F_{\lambda,w}=\{{h_{\lambda,w}}_* X \mid X\in \F\}$. 
Thanks  to formula~\eqref{lim} we can assert that, if $\lambda$ is chosen sufficiently small, the sphere of center $w$ and radius $r$ is contained in the reachable set 
for  $\F_{\lambda,w}$ 
from every point of the sphere of center $w$ and radius $1$ (see \cite[Lemma 3.2]{Sus}). 
Fix such a $\lambda>0$.

Given $w$ in $\R^3$ we claim that $\A(w,\F)$ contains a neighborhood of $w$. 
Indeed, 
since the sets $\A(x,\F)$ and $\A(x,-\F)$ are unbounded, the same is true for the sets $\A(x,\F_{\lambda,w})$ and $\A(x,-\F_{\lambda,w})$, for every $x\in\R^3$. This implies in particular, because of the arc-connectedness of attainable sets,  
 that there exists 
$y\in \A(w,\F_{\lambda,w})$ such that $\|y-w\| = 1$. 
Let $\rho<r$ and fix $z\in B(w,\rho)$. 
Since $\A(z,-\F_{\lambda,w})$ is unbounded, 
 then, again by  arc-connectedness,
there exists $z'\in \A(z,-\F_{\lambda,w})$ such that $\|z'-w\|=r$.
Since from every point of the sphere of center $w$ and radius 1, we can reach any point of the 
sphere of center $w$ and radius $r$, the point $z'$ is reachable from $y$ by the family $\F_{\lambda,w}$. 
\begin{figure}
\label{slere}
\begin{center}
\input{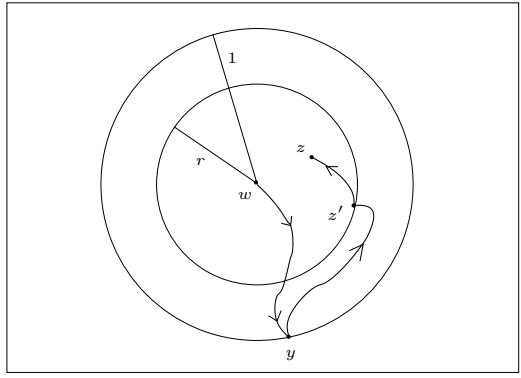}
\caption{$z$ belongs to $\A(w,\F_{\lambda,w})$ (the case $r<1$).}
\end{center}
\end{figure}
Finally,
every point $z\in B(w,\rho)$ belongs to $\A(w,\F_{\lambda,w})$ (see Figure~\arabic{figure}).
Since $\F_{\lambda,w}$ is the transformation of $\F$ by a diffeomorphism preserving $w$, we deduce that $\A(w,\F)$ contains a neighborhood of $w$. Therefore, every attainable set $\A(x,\F)$ is open.

Since the control system defined by $-\F$ has the same form as that defined by $\F$ and verifies the hypotheses of both Lemmas \ref{spheres} and \ref{unbondedness}, 
the reasoning above proves also that every attainable set $\A(x,-\F)$ is open.
Take now $y$ in the closure of $\A(x,\F)$. The set $\A(y,-\F)$ being open, there exists 
$z\in \A(y,-\F)\cap \A(x,\F)$ which proves that $y$ is reachable from $x$ by the family $\F$. 
Every attainable set $\A(x,\F)$ is 
therefore both open and closed; thus it is equal to $\R^3$.
\end{proof}

\subsection{Generic controllability: physical 
interpretation
%
}\label{phis}

The scope of this section is to provide a better physical insight of Theorem~\ref{gnrc}.
More precisely, we want to check that for $m\geq 3$  
there exists an open and dense set in the (suitably defined) space of 
microscopic organisms 
modeled here
such that the corresponding control system 
\r{vz}-\r{R} is controllable.

The space of organisms, denoted by $\Sigma_m$, will be identified with 
the set of pairs $(\Om,\Psi)$, where $\Om\subset \R^3$ is open, nonempty, connected, bounded, and
of class $\CC^2$, while $\Psi$ is the $m$-uple of functions
in $\CC^2(\partial \Om,\R^3)$ 
determining the action of the controls
(see Section~\ref{dynamics}). For simplicity 
we assume that the density $\delta$ of the organism is constant, so that the inertia matrix  $J$ is determined (up to the positive multiplicative constant $\delta$) by the shape $\Om$. (For the general case see Remark~\ref{nonC}.)
The topology on  $\Sigma_m$ can be defined assigning 
the 
basis 
of open subsets
defined by
\[
\begin{split}
\mathscr{N}_{m,\eps}(\Om,\Psi)=&\left\{\lp (\Id+\upsilon)(\Om),\Phi\rp\mid \upsilon\in W^{3,\infty}(\Om,\R^3),\Phi\in(\CC^2(\partial \Om,\R^3))^m,\right.\\
&\phantom{\{}\left.\|\upsilon\|_{3,\infty}<\eps,\|\Phi\circ(\Id+\upsilon)-\Psi\|_{\CC^2(\Om)}<\eps\right\}
\end{split}
\]
for all $\eps\in(0,1/2)$ and all $(\Om,\Psi)\in \Sigma_m$
(see \cite{simon} or, for a different approach,  \cite{chitour_coron_garavello}). 
By extension $\Sigma_0$ denotes the set of $\CC^2$, open, nonempty, connected, bounded subsets of $\R^3$ endowed with the topology whose basis is given by all
\[
\mathscr{N}_{0,\eps}(\Om)=\left\{ (\Id+\upsilon)(\Om)\mid 
\upsilon\in W^{3,\infty}(\Om,\R^3),\;
\|\upsilon\|_{3,\infty}<\eps\right\},
\]
$\eps\in(0,1/2)$, $\Om\in \Sigma_0$.

\begin{thm}
Assume that $m\geq 3$. 
There exists an open and dense set 
${\cal O}$ in $\Sigma_m$ such that \r{vz}-\r{R} is controllable
if $(\Om,\Psi)$ belongs to ${\cal O}$. Moreover, there exists an open and dense set ${\cal P}$ in $\Sigma_0$ such that
${\cal O}\cap (\{\Om\}\times (\CC^2(\partial \Om,\R^3))^m)$
is dense in  $\{\Om\}\times (\CC^2(\partial \Om,\R^3))^m$ for every $\Om$ in ${\cal P}$.
\end{thm}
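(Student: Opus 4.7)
The plan is to pull back the finite-dimensional statement of Theorem~\ref{gnrc} through the parameter map
\[
\Phi:\Sigma_m\to \M_{6\times 6}\times \M_{6\times m}\times \M_{3\times 3},\quad (\Om,\Psi)\mapsto (A(\Om),B(\Om,\Psi),J(\Om)),
\]
whose components are defined by \r{e_J}, \r{e_A}, \r{e_B}. Let $O^*\subset \Xi_1^m$ denote the open dense set where the explicit algebraic conditions of Proposition~\ref{Th1} hold (these imply a fortiori those of Proposition~\ref{p-a}). The candidates will be $\caO=\Phi^{-1}(O^*)$ and a shape set $\caP\subset\Sigma_0$ described below.

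The first step is the continuity of $\Phi$. For $\Om\mapsto J(\Om)$ this is elementary, since \r{e_J} is a volume integral that, after the change of variables $y=(\Id+\upsilon)(x)$, becomes polynomial in $\upsilon$ and its first derivatives. For $A$ and $B$ one invokes the standard shape-sensitivity theory for the exterior Stokes system, compatible with the $W^{3,\infty}$ topology defining $\Sigma_m$ (cf.\ \cite{simon}): the auxiliary Stokes solutions $(h^{(i)},p^{(i)})$ and $(H^{(i)},P^{(i)})$ depend continuously on $\Om$ in their natural weighted Sobolev spaces; the tractions $g^{(i)},G^{(i)}$ then depend continuously on $\Om$ in $L^2(\partial\Om,\R^3)$ after pull-back; so the boundary integrals defining $\mau^j,\mad^j,\lau,\lad$ are continuous in $(\Om,\Psi)$. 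Consequently $\caO=\Phi^{-1}(O^*)$ is open in $\Sigma_m$.

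For density of $\caO$, the key observation is that, with $\Om$ frozen, the linear map $\Psi\mapsto B(\Om,\Psi)$ is \emph{surjective} onto $\M_{6\times m}$: its $j$-th column is computed from the six functionals $\psi_j\mapsto\int_{\partial\Om}g^{(i)}\cdot\psi_j\,ds$ and $\psi_j\mapsto\int_{\partial\Om}G^{(i)}\cdot\psi_j\,ds$, $i=1,2,3$, and the six tractions $g^{(i)},G^{(i)}$ are linearly independent in $L^2(\partial\Om,\R^3)$---a nontrivial null combination would produce a nonzero rigid-body Stokes flow on $\R^3\smallsetminus\Cl(\Om)$ with vanishing boundary traction, contradicting the positive-definiteness of the grand resistance matrix. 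Now define $\caP$ as the set of $\Om\in\Sigma_0$ for which $J(\Om)$ has three distinct eigenvalues and $A_{21}(\Om)=J(\Om)^{-1}\T{(\mau^2(\Om))}$ is invertible. For such $\Om$ the sufficient conditions of Proposition~\ref{Th1} become open dense conditions on $B\in\M_{6\times m}$: invertibility of $B_2$ is generic; as $B_1B_2^{-1}$ varies in $\M_{3\times 3}$, the product $B_1B_2^{-1}A_{21}$ sweeps out all of $\M_{3\times 3}$ (because $A_{21}$ is invertible), so $\tilde A_{11}+\T{\tilde A_{11}}$ ranges over all symmetric matrices and is generically invertible; and since $J$ has three distinct eigendirections, a generic $B_1B_2^{-1}$ preserves none of them. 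Combined with the surjectivity above, this yields the second claim of the theorem for every $\Om\in\caP$, and also the density of $\caO$ in $\Sigma_m$ once $\caP$ is known to be dense.

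Openness of $\caP$ is immediate from continuity of $\Om\mapsto(J(\Om),A_{21}(\Om))$. The step I expect to be the \emph{main obstacle} is density of $\caP$ in $\Sigma_0$. Non-scalarness of $J(\Om)$ is easily forced by small asymmetric bumps of $\partial\Om$; the delicate point is invertibility of the translation/rotation coupling $\mau^2(\Om)$, which vanishes identically on highly symmetric shapes such as spheres. One argues that $\Om\mapsto\det\bigl(\mau^2(\Om)\bigr)$ cannot vanish on any $W^{3,\infty}$-neighborhood of an arbitrary $\Om_0$ by combining the real-analytic dependence of $\mau^2$ along smooth one-parameter deformations of $\Om_0$ with an explicit chiral perturbation on which $\mau^2$ is nondegenerate. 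This makes $\{\det\mau^2\ne 0\}$ open and dense in $\Sigma_0$, so that $\caP$ is open and dense, concluding the proof.
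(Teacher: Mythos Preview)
Your overall strategy---pull back the finite-dimensional genericity through the parameter map and exploit surjectivity of $\Psi\mapsto B$---is exactly the paper's. But two points in your execution are problematic, and the second is precisely the ``main obstacle'' you flag.

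\medskip

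\textbf{(i) Proposition~\ref{Th1} does not subsume Proposition~\ref{p-a}.} Your parenthetical ``(these imply a fortiori those of Proposition~\ref{p-a})'' is unjustified. Proposition~\ref{Th1} yields controllability of the six-dimensional $z$-system~\r{vz}; by Proposition~\ref{GENERALE} you still need the Lie-bracket generating condition for the full system at $(0_9,\Id)$, and that is what Proposition~\ref{p-a} supplies. The two sets of hypotheses are logically independent: for instance, with $J$ simple and the first column of $B_2$ an eigenvector of $J$, condition~\r{hypo2} fails while the hypotheses of Proposition~\ref{Th1} can still hold. You must take $O^*$ to be the intersection of the two open dense sets.

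\medskip

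\textbf{(ii) The invertibility of $A_{21}$ is unnecessary, and dropping it removes your main obstacle.} You impose $\det A_{21}\ne 0$ only to argue that, as $B_1B_2^{-1}$ ranges over $\M_{3\times 3}$, the symmetric matrix $\tilde A_{11}^s$ sweeps out all symmetric matrices. But this is overkill: it suffices that $\det(\tilde A_{11}^s)$, a polynomial in the entries of $B$, does not vanish identically. Take $B_1=0$ and $B_2$ invertible; then $\tilde A_{11}=A_{11}=\barm^{-1}\mau^1$, which is symmetric and negative-definite for \emph{every} $\Om$ (it is the top-left block of the negative-definite matrix $MA$ with $M=\diag(\barm\Id,J)$). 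Hence $\tilde A_{11}^s$ is invertible at this particular $B$, so the condition is open and dense in $B$ regardless of $A_{21}$. Consequently the paper's $\caP$ is simply $\{\Om:\ J(\Om)\mbox{ has simple spectrum}\}$, whose openness and density are shown by the elementary diagonal-scaling perturbation $(\Id+\diag(\upsilon_1,\upsilon_2,\upsilon_3))(\Om)$ and a Jacobian computation. Your chiral-perturbation argument for genericity of $\det\mau^2\ne 0$---which you rightly identify as delicate and leave only sketched---is not needed at all.
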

\begin{proof} 
Denote by $F(\Om,\Psi)$ the triple $(A,B,J)$  associated to  
an element $(\Om,\Psi)$ of $\Sigma_0$ through \r{e_J}, \r{e_A} and \r{e_B}.
We want to show that 
each condition appearing in the 
statement of
Propositions~\ref{p-a} and \ref{p-c}
is satisfied by the elements of the image through $F$ of
an open and dense subset of $\Sigma_0$.

Notice that the map  $\Psi \mt B$ is onto when considered from $(\CC^2(\partial \Om,\R^3))^m$ to $\M_{6\times m}$ ($\Om$ fixed). Indeed,
according to \cite[Lemma~4.1]{nancy}, $g^{(1)}, g^{(2)}, g^{(3)},G^{(1)}, G^{(2)}, G^{(3)}$ are linearly independent in $L^2(\partial \Om,\R^3)$. Therefore,
 since the orthogonal 
to a smooth function 
in $\CC^2(\partial \Om,\R^3)$ with respect to the $L^2$-product has infinite codimension, 
the relation \r{e_B} defines a surjective map.
In particular, the pre-image of an open and dense set in
$\M_{6\times m}$ is open and dense in $(\CC^2(\partial \Om,\R^3))^m$.

Let us now take into account the dependence on $\Om$. 
We claim 
that the map from $\Sigma_0$ 
to  $\R^3$ that associates to a domain $\Om$ the spectrum of the corresponding inertia matrix --with eigenvalues repeated according to their multiplicity and with $\delta$ normalized to one-- is 
 locally open at every point.
 In order to check it, 
 fix $\Om\in \Sigma_0$ and a system of coordinates such that $0_3$ is the center of mass of $\Om$ and $e_1,e_2,e_3$ its principal axes of inertia. 
 Then the elements of the normalized inertia matrix $J^0=J/\delta$ of $\Om$ are
\be\label{centrato}
J^0_{ij}=\left\{
\ba{ll}
\pm \int_\Om x_i x_j dx=0&\mbox{ if }i\ne j,\\
 \int_\Om (\sum_{k\ne i}x_k^2)d x&\mbox{ if }i= j. 
\ea\right.
\ee 
 Apply the transformation $\Id+\upsilon$ to $\Om$, where $\upsilon$ is the diagonal matrix $\diag(\upsilon_1,\upsilon_2,\upsilon_3)$, and denote by $J^0(\upsilon)$ the normalized inertia matrix of $(\Id+\upsilon)(\Om)$.  Then 
$$
J^0_{ij}(\upsilon)=\left\{
\ba{ll}
\pm (\Pi_{k=1}^3(1+\upsilon_k))(1+\upsilon_i)(1+\upsilon_j)\int_\Om x_i x_j dx=0&\mbox{ if }i\ne j,\\
\pm (\Pi_{k=1}^3(1+\upsilon_k)) \int_\Om (\sum_{k\ne i}(1+\upsilon_k)^2 x_k^2)d x&\mbox{ if }i= j. 
\ea\right.
$$
 The spectrum of $J^0(\upsilon)$ is therefore given by the components of the vector $\sigma^0(\upsilon)=(\pm (\Pi_{k=1}^3(1+\upsilon_k))\int_\Om (\sum_{k\ne i}(1+\upsilon_k)^2 x_k^2)d x)_{i=1}^3$. A straightforward  computation shows that the determinant of the Jacobian matrix of $\sigma^0$ with respect to $(\upsilon_1,\upsilon_2,\upsilon_3)$, computed at  $\upsilon_1=\upsilon_2=\upsilon_3=0$, is different form zero. The map $\upsilon\mapsto \sigma^0(\upsilon)$ is therefore a submersion at 
 $\upsilon=0$.
   
 In particular, 
 the eigenvalues of $J$ are simple for $\Om$ in an open and dense subset 
 of $\Sigma_0$, independently of $\Psi$. If $b_2$ does not belong to any of the three planes generated by pairs of eigenvectors of $J$, then \r{hypo2} is automatically verified. 
 
Similarly, one notices that all 
the  assumptions
appearing in the statements of  
 Propositions~\ref{p-a} and \ref{p-c}
can be represented in the form 
$G(\Om,\Psi)\in O$ where
 $O$ is an open and dense subset of a finite-dimensional vector space $V$, the map $G:\Sigma_m \to V$ is continuous and
  for $\Om$ in an open and dense subset of $\Sigma_0$
  we have that $G(\Om,\Psi)$ belongs to ${\cal O}$
  if $B$ belongs to an open and dense subset of $\M_{6\times m}$ (possibly depending on $\Om$).  
Therefore,  $G^{-1}({\cal O})$ is open and dense in $\Sigma_m$.
\end{proof}

\begin{rmk}
As recalled above it follows from \cite{nancy} that, once $\Om$ is fixed, the linear map associating 
to the $m$-uple $\Psi$
the matrix $B$ through \r{e_B}
is onto as a map from $(L^2(\partial \Om,\R^3))^m$ to $\M_{6\times m}$. 
Thanks to this result, 
and to the remark that \r{vz}-\r{R} is controllable if $m=6$ and $B$ is invertible,
 the authors prove that   for $m\geq 6$, for an open and dense subset of $m$-uples in $\CC^2(\partial \Om,\R^3)$ ($\Om$ fixed) the corresponding system
is controllable (see \cite[Theorem~1.1]{nancy}). In the 
language adopted here the result of \cite{nancy} says that, for every fixed $\Om$ and for $m\geq 6$, controllability is a generic property with respect to $\Psi$. 
\end{rmk}

\begin{rmk}\label{nonC}
In order to introduce a reasonable notion of genericity in
the case where the density $\delta$ of the organism is not constant, 
it is necessary to include $\delta$ in 
the definition of $\Sigma_m$. 
Let $\hat \Sigma_m$ be the set of triples $(\Om,\Psi,\delta)$ where $(\Om,\Psi)\in\Sigma_m$ and $\delta\in L^\infty(\Om,(0,+\infty))$ and endow $\hat \Sigma_m$ with the topology whose basis is given by all
\[
\begin{split}
\hat{\mathscr{N}}_{m,\eps}(\Om,\Psi,\delta)=&\left\{\lp (\Id+\upsilon)(\Om),\Phi,\gamma\rp\mid \upsilon\in W^{3,\infty}(\Om,\R^3),\Phi\in(\CC^2(\partial \Om,\R^3))^m,
\gamma\in L^\infty(\Om,(0,+\infty)),
\right.\\
&\phantom{\{}\left.\|\upsilon\|_{3,\infty}<\eps,\|\Phi\circ(\Id+\upsilon)-\Psi\|_{\CC^2(\Om)}<\eps,\|\gamma\circ(\Id+\upsilon)-\delta\|_{L^\infty(\Om)}<\eps
\right\},
\end{split}
\]
$\eps\in(0,1/2)$, $(\Om,\Psi,\delta)\in \hat\Sigma_m$. 
Following the same arguments of proof as above, one can prove that 
if $m\geq 3$ there exists an open and dense set 
${\cal O}$ in $\hat\Sigma_m$ such that \r{vz}-\r{R} is controllable
if $(\Om,\Psi,\delta)$ belongs to ${\cal O}$.
The only difference consists in verifying that the map  associating to $(\Om,\delta)$ the spectrum of $J$ is locally open, and this can be done by taking the same perturbation $(\Id+\diag(\upsilon_1,\upsilon_2,\upsilon_3))(\Om)$ as above for $\Om$ and the perturbation $\delta\circ (\Id+\diag(\upsilon_1,\upsilon_2,\upsilon_3))$ for $\delta$. 
\end{rmk}

\section{Swimming spherical microscopic organisms}\label{tondo}
When the microscopic organism happens to be spherical, the equations presented in the previous sections have a very special form.
As described in \cite{nancy}, general results in hydrodynamics 
(see, 
e.g.,  \cite{low})
show that the matrix $A$ 
corresponding to a spherical organism is diagonal and, more precisely, of the form
\[A=\lp\ba{cc}-\rho_1 \Id& 0\\ 0&-\rho_2 \Id\ea\rp,\]
with 
\be\label{rhos}
\rho_2>\rho_1>0. 
\ee
Moreover, since the inertia matrix 
$J$ 
is proportional to the identity (assuming that the mass distribution is homogenous inside $\Om$), the nonlinear term $E$ appearing in \r{vz}
is given by
\[E(z)=\lp\ba{c}\om\times \xi\\ 0_3\ea\rp.\]

Notice that in the spherical case 
$\om$ satisfies a well-defined control subsystem, ie,
\be\label{eq_om}
\dot \om=-\rho_2 \om +B_2 u.
\ee

We are going to prove the following result.
\begin{thm}\label{main}
Let $\Om$ be a ball and assume that the mass distribution of $\Om$ is homogenous.
Then the control system \r{vz}--\r{R} is 
controllable if 
and only if 
the rank of $B_2$ is equal to three and $B_1$ is different from $\0_{3\times m}$. 
\end{thm}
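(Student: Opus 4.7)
The equation $\dot\omega = -\rho_2\omega + B_2 u$ is a self-contained linear system with scalar drift and is therefore controllable iff $\rank(B_2) = 3$. If $B_1 = 0$, then $\dot\xi = -\rho_1\xi + \omega\times\xi$ yields $\tfrac{d}{dt}\|\xi\|^2 = -2\rho_1\|\xi\|^2$, so the trajectory from $\xi_0 = 0$ stays at $0$ and $\xi$ cannot be driven to every point of $\R^3$. Both conditions are therefore necessary.

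\textbf{Sufficiency.} Assume $\rank(B_2) = 3$ and $B_1 \neq 0$. Choosing three suitable linear combinations of the columns of $B$ (so that the new $B_2$-block is invertible and the new $B_1$-block nonzero), it is enough to treat $m = 3$ with $B_2$ invertible and $B_1 \neq 0$. Proposition~\ref{GENERALE} then reduces the task to verifying (i) controllability of \eqref{vz} on $\R^6$ and (ii) the Lie-bracket generating condition of the full system at $(0_9,\Id)$.

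For (i) I apply Lemma~\ref{propcontr3}: the identity $J^{-1}(Jv\times v) = 0$ (valid since $J = c\Id$) simplifies the reduced equation to
\[
\dot x = -\rho_1 x - v\times x + (\rho_2 - \rho_1)\tilde B v - v\times \tilde B v, \qquad \tilde B := B_1 B_2^{-1}.
\]
If $\tilde B$ is not a scalar multiple of $\Id$, then some vector is not an eigenvector of $\tilde B$ while every vector is an eigenvector of $J = c\Id$, and since $\tilde A_{11}^s = -\rho_1\Id$ has eigenvalues of constant sign, Lemma~\ref{contr2} applies. The residual subcase $\tilde B = \alpha\Id$ with $\alpha \neq 0$ requires a separate argument: the cubic term vanishes and the equation becomes $\dot x = -\rho_1 x + (\mu\Id + S(x))\,v$ with $\mu := \alpha(\rho_2-\rho_1) \neq 0$; the spectrum $\{\mu,\, \mu \pm i\|x\|\}$ of $\mu\Id + S(x)$ is always nonzero, so the matrix is globally invertible and any smooth curve $\gamma:[0,T]\to\R^3$ is tracked exactly by the explicit control $v(t) = (\mu\Id + S(\gamma(t)))^{-1}(\dot\gamma(t) + \rho_1\gamma(t))$.

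The main obstacle lies in (ii). By \eqref{CVF} with $J = c\Id$, the brackets $[[X_0,X_i],X_j]$ are the constant vector fields $(b_2^{(i)}\times b_1^{(j)} + b_2^{(j)}\times b_1^{(i)},\, 0,\, 0_6)$, and whenever these symmetric cross-products span $\R^3$ they combine with the $X_i$'s to span $\R^6\times\{0_6\}$, so Remark~\ref{total} gives LBG. The delicate subcases are those in which the cross-products fall short of spanning $\R^3$, the hardest being $\tilde B = \alpha\Id$, where they all vanish. For $\tilde B = \alpha\Id$ I would verify LBG by direct computation: the six vector fields $\{X_i,\,[X_0,X_i]\}_{i=1,2,3}$ already span a six-dimensional subspace at $(0_9,\Id)$ whose projection onto the $\omega$- and $\SO(3)$-tangent coordinates is bijective, while the brackets $[[X_0,X_i],[X_0,X_j]]$ (whose nontrivial $\SO(3)$-tangent component equals $b_2^{(i)}\times b_2^{(j)}$) together with $[[X_0,[X_0,X_i]],X_j]$ yield six further independent directions, as a calculation exploiting $\rho_1 \neq \rho_2$ shows. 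With LBG in hand, Proposition~\ref{GENERALE} closes the proof.
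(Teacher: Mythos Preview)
Your overall architecture matches the paper's: reduce via Proposition~\ref{GENERALE} to (i) controllability of \eqref{vz} and (ii) the Lie-bracket generating condition, and split both according to whether $\tilde B=B_1B_2^{-1}$ is a scalar multiple of the identity. Your treatment of (i) is fine; in fact your argument in the scalar subcase---observing that $\mu\Id+S(x)$ is everywhere invertible and solving for $v$ explicitly---is cleaner than the paper's radial/spherical decomposition of $v$.

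The genuine gap is in (ii) for the non-scalar case. You rely on the first-level brackets $[[X_0,X_i],X_j]=(b_2^{(i)}\times b_1^{(j)}+b_2^{(j)}\times b_1^{(i)},0,0_6)$ to produce $\R^3\times\{0_9\}$, but these need not span $\R^3$ even when $\tilde B$ is non-scalar: with $B_2=\Id$ and $B_1=\diag(1,1,0)$ one gets only $\spann(e_1,e_2)$. You flag such cases as ``delicate'' but do not handle them. The fix is immediate once noticed: since $\tilde B$ is non-scalar, at least one first-level bracket is a nonzero constant vector field $(v,0,0_6)$ (indeed, if all vanished then $b_i\parallel e_i$ with equal coefficients, forcing $\tilde B$ scalar); bracketing once more, $[[X_0,X_i],(v,0,0_6)]=(e_i\times v,0_9)$, and $\{v,e_1\times v,e_2\times v,e_3\times v\}$ spans $\R^3$. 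This is exactly the paper's Lemma~\ref{access}, which first normalizes (via a control reparametrization) so that the first column of $B_1$ is not parallel to $e_1$ and then takes this extra bracket.

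Two smaller points. First, your reduction ``choose three suitable linear combinations so that the new $B_2$-block is invertible and the new $B_1$-block is nonzero'' is correct but not entirely automatic; the paper devotes a short lemma to it (and in fact achieves the stronger normalization $c_1\times d_1\neq0$). Second, your LBG sketch in the scalar case invokes $[[X_0,[X_0,X_i]],X_j]$ without computing it; the paper instead uses the triple bracket $[[[X_0,X_i],[X_0,X_j]],[X_0,X_k]]$ and checks explicitly that the resulting twelve vector fields form a moving frame, the independence hinging on $\rho_1\ne\rho_2$. Your choice may also work, but the computation is not shown.
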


The easier part of the proof is to show 
that the 
controllability of  \r{vz}--\r{R}
implies that 
\be\label{f_rank}
\rk B_2=3,\ \ \ \ \ B_1\not=0_{3\times m}.
\ee 
Indeed, 
the linear control system 
\r{eq_om}
is, as it is well known, controllable if and only if $\rk B_2=3$.
Moreover, 
if $B_1=0_{3\times m}$, then
the space $\{0_3\}\times\R^3$ is invariant for the dynamics of
the control system \r{vz}, which is therefore non-controllable.

The converse implication 
will be proven in several steps.

\subsection{The case where $B_1$ and $B_2$ are linearly independent}
 In this section 
we study the case where the organism is spherical and
\be\label{rank}
\rk B_2=3,\ \ \ \ \ B_1\not\in\spann(B_2).
\ee

Let us prove the following technical result.
\begin{lemma}
If assumption \r{rank} holds true, then 
there exist $c_1,c_2,c_3\in 
 \R^3$,
 an orthonormal basis  $\{d_1,d_2,d_3\}$ of $\R^3$,
and  a $m\times  3$ matrix $\Gamma$ such that
\be\label{the_last}
B_1 \Gamma v=\sum_{i=1}^3 v_i c_i,\ \ \ \ B_2 \Gamma v=\sum_{i=1}^3 v_i d_i,\ \ \ \  c_1\times d_1\ne\0_3.
\ee
\end{lemma}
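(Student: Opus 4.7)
The plan is to construct $\Gamma$ by decomposing $\R^m$ via the column space of $B_2$. Since $\rk B_2 = 3$, the map $B_2 : \R^m \to \R^3$ is surjective and admits a right inverse $\Gamma_0 \in \M_{m \times 3}$ with $B_2 \Gamma_0 = \Id$, yielding $\R^m = \mathrm{im}(\Gamma_0) \oplus \ker B_2$. I would then seek $\Gamma$ of the form $\Gamma = \Gamma_0 D + K$, where $D \in SO(3)$ and the columns of $K \in \M_{m \times 3}$ lie in $\ker B_2$. This automatically forces $B_2 \Gamma = D$, whose columns $d_1, d_2, d_3$ form an orthonormal basis; the columns of $B_1 \Gamma$ read $c_i = \tilde B_1 d_i + B_1 k_i$, where $\tilde B_1 := B_1 \Gamma_0 \in \M_{3 \times 3}$ and $k_i$ is the $i$-th column of $K$. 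The problem thus reduces to choosing a unit vector $d_1$ and a vector $k_1 \in \ker B_2$ such that $c_1 = \tilde B_1 d_1 + B_1 k_1$ is not parallel to $d_1$.

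Next I would split into two cases. If $\tilde B_1$ is not a scalar multiple of $\Id$, pick any unit vector $d_1$ that is not an eigenvector of $\tilde B_1$ and take $k_1 = 0$, so that $c_1 = \tilde B_1 d_1 \not\parallel d_1$. The delicate case is $\tilde B_1 = \lambda \Id$ for some $\lambda \in \R$, where the adjustment by $k_1 \in \ker B_2$ is essential. The identity $\tilde B_1 = \lambda \Id$ rewrites as $(B_1 - \lambda B_2)\Gamma_0 = 0$, so $B_1 - \lambda B_2$ vanishes on $\mathrm{im}(\Gamma_0)$. The hypothesis $B_1 \notin \spann(B_2)$ rules out $B_1 = \lambda B_2$, so $B_1 - \lambda B_2$ is nonzero; using the direct sum decomposition of $\R^m$, it must therefore be nonzero on some $k \in \ker B_2$. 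For such a $k$ one has $B_1 k = (B_1 - \lambda B_2) k \neq \0_3$; choosing a unit vector $d_1$ not parallel to $B_1 k$ and setting $k_1 = k$ yields $c_1 = \lambda d_1 + B_1 k$, whose orthogonal component with respect to $d_1$ is nonzero.

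To conclude, I would complete $d_1$ to an orthonormal basis $\{d_1, d_2, d_3\}$ of $\R^3$ (any completion will do) and set $k_2 = k_3 = 0$, since no constraint is imposed on $c_2$ or $c_3$. The resulting $\Gamma = \Gamma_0 D + K$ satisfies \eqref{the_last} in both cases. The main obstacle is the degenerate case $\tilde B_1 = \lambda \Id$: the hypothesis $B_1 \notin \spann(B_2)$ is then used not through $\tilde B_1$ itself, which is insensitive to what happens on $\ker B_2$, but by exhibiting a vector $k \in \ker B_2$ on which $B_1$ acts nontrivially, which is precisely the ingredient needed to perturb $c_1$ away from the direction of $d_1$.
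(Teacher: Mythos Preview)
Your argument is correct. The right-inverse construction and the decomposition $\R^m=\mathrm{im}(\Gamma_0)\oplus\ker B_2$ are sound, and both cases of your dichotomy are handled properly: in the non-scalar case the existence of a unit vector that is not an eigenvector of $\tilde B_1$ is guaranteed, and in the scalar case your use of $B_1\notin\spann(B_2)$ to produce $k\in\ker B_2$ with $B_1k\ne 0_3$ is exactly what is needed.

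The paper proceeds differently: it works column by column, first treating $m=3$ directly via $\Gamma=B_2^{-1}(d_1|d_2|d_3)$, and for $m>3$ it looks for three column indices $j_1,j_2,j_3$ for which the $3\times 3$ submatrices $(B_1^{(j_i)})$ and $(B_2^{(j_i)})$ already satisfy the $m=3$ hypotheses; if no such triple exists, it shows there must be a column with $B_2^{(j)}=0_3$ and $B_1^{(j)}\ne 0_3$, which is then added to one of the selected columns to force the non-proportionality. Your approach and the paper's isolate the same obstruction---the degenerate situation where $B_1$ agrees with a scalar multiple of $B_2$ on a complement of $\ker B_2$---but you reach it through an abstract splitting while the paper reaches it through an explicit column search. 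Your route is more uniform (no separate $m=3$ base case, no combinatorics on column triples) and makes the role of $\ker B_2$ transparent; the paper's route has the minor advantage that the resulting $\Gamma$ is visibly built from coordinate projections and a single column addition.
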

\begin{proof}
 Firstly, let us consider the case $m=3$. Then $B_2$ is invertible and for every orthonormal basis $\{d_1, d_2,d_3\}$ of $\R^3$, we have
 $$ B_2 \Gamma v=\sum_{i=1}^3 v_i d_i,$$
 for $\Gamma=B_2^{-1}(d_1|d_2|d_3)$, where $(d_1|d_2|d_3)$ denotes the matrix whose columns are $d_1$, $d_2$ and $d_3$. 
 Since $B_1 B_2^{-1}$ is not proportional to $\Id$, then 
 $d_1$ can be chosen such that $d_1\times (B_1 B_2^{-1}d_1)\ne 0_3$. 
 It suffices to complete $d_1$ to any orthonormal basis $\{d_1, d_2,d_3\}$, since $B_1 \Gamma v=\sum_{i=1}^3 v_i c_i$ with 
 $c_i=B_1 B_2^{-1} d_i$.

Let now $m>3$. 
Denote by $B_i^{(j)}$ the $j$-th column of $B_i$. 

The second case we consider is when
there exist $1\leq j_1,j_2,j_3\leq m$ such that
$B_2^{(j_1)}$, $B_2^{(j_2)}$, $B_2^{(j_3)}$ 
are linearly independent and 
$(B_1^{(j_1)}|B_1^{(j_2)}|B_1^{(j_3)})\not\in\spann(B_2^{(j_1)}|B_2^{(j_2)}|B_2^{(j_3)})$.  Then, replacing $v$ by $\Gamma' v=(v_{j_1},v_{j_2},v_{j_3})$, we fulfill the hypotheses of the case 
$m=3$ and we are done.

Finally, consider the case in which for every triple $1\leq j_1,j_2,j_3\leq m$ such that
$B_2^{(j_1)}$, $B_2^{(j_2)}$, $B_2^{(j_3)}$ 
are linearly independent, we have that $(B_1^{(j_1)}|B_1^{(j_2)}|B_1^{(j_3)})$ is proportional to $(B_2^{(j_1)}|B_2^{(j_2)}|B_2^{(j_3)})$. Let 
$$\{j_1,\dots,j_k\}=\{j\mid 1\leq j\leq m,\;B_2^{(j)}\ne 0_3\}.$$
Then $(B_1^{(j_1)}|\cdots|B_1^{(j_k)})$ is proportional to 
$(B_2^{(j_1)}|\cdots|B_2^{(j_k)})$. 
Assumption~\r{rank} implies that there exists $1\leq j\leq m$ such 
that $B_1^{(j)}\not=0_3$ and $B_2^{(j)}=0_3$. 
It suffices then to replace $v$ by $\Gamma' v=(v_{j_1}+v_j,v_{j_2},\dots,v_{j_k})$ in order to satisfy the hypotheses of the previous case. 
\end{proof}

Notice now that 
for every $Q\in SO(3)$, the change of coordinates $\xi'=Q\xi,\om'=Q\om,\zeta'=\zeta,R'=R Q^{-1}$ preserves the dynamics of \r{vz}--\r{R}.
This is equivalent to say that, up to a change of coordinates, we can replace $(B_1,B_2)$ by $(Q B_1,Q B_2)$, for every choice of $Q\in SO(3)$. 
Let $d_1,d_2,d_3$ be as in \r{the_last} and 
choose $Q$ such that $Q d_i=e_i$ for $i=1,2,3$ \add{(where we recall that $(e_i)_{i=1,2,3}$ denotes the canonical basis in $\R^3$)}. 
The above transformations show that 
proving the 
controllability 
of the control system \r{vz}--\r{R} under assumption~\r{rank}
is equivalent to proving it under the hypotheses that $m=3$, $B_2=\Id$, and that the first column of $B_1$ is not proportional to $e_1$.

Denote by $b_1$, $b_2$, $b_3$ the columns of $B_1$. 
Accordingly,
the controlled vector fields 
are the three constant vector fields
$X_i=(b_i,e_i,\0_6)$, $i=1,2,3$.


\begin{lemma}\label{access}
If \r{rank} holds then system \r{vz}--\r{R} is Lie-bracket generating. 
\end{lemma}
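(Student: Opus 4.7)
The strategy is to apply Remark~\ref{total}: it suffices to show that the constant vector fields contained in $\L$ span $\R^6\times\{0_6\}$. In the spherical setting $J$ is a scalar multiple of $\Id$, so $J^{-1}(Jw_2\times v_2+Jv_2\times w_2)=w_2\times v_2+v_2\times w_2=0$, and formula \r{CVF} reduces in the present case to the simpler identity
\[
[[X_0,V],W]=(v_2\times w_1+w_2\times v_1,\,0_3,\,0_6)
\]
for any constant vector fields $V=(v_1,v_2,0_6)$ and $W=(w_1,w_2,0_6)$. In particular, brackets of constant vector fields with $X_0$ (taken twice) produce new constant vector fields whose second block vanishes, a feature I will exploit repeatedly.

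First I apply the identity above with $V=W=X_1=(b_1,e_1,0_6)$ to obtain
\[
V_{11}=[[X_0,X_1],X_1]=(2\,e_1\times b_1,\,0_3,\,0_6)\in\L.
\]
The hypothesis that $b_1$ is not proportional to $e_1$ (after the reduction carried out before the statement of the lemma) ensures that $c:=e_1\times b_1$ is a nonzero vector of $\R^3$; hence $(c,0_3,0_6)\in\L$ with $c\neq 0_3$.

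Next I bracket $V_{11}$ against the controls to spread the direction $c$ over all of $\R^3$. Applying the reduced formula with $V=(c,0_3,0_6)$ and $W=X_k=(b_k,e_k,0_6)$ gives
\[
[[X_0,(c,0_3,0_6)],X_k]=(e_k\times c,\,0_3,\,0_6)\in\L,\qquad k=1,2,3.
\]
The three vectors $e_1\times c$, $e_2\times c$, $e_3\times c$ span the plane $c^\perp$, so together with $c$ itself they span $\R^3$. Consequently, the subspace
\[
S_0=\{v\in\R^3\mid (v,0_3,0_6)\in\L\}
\]
contains all of $\R^3$.

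Finally, the three vector fields $X_1,X_2,X_3$ have second blocks $e_1,e_2,e_3$ which span $\R^3$ and are transverse to $S_0\times\{0_3\}$. Since $\dim S_0=3$ and $X_1,X_2,X_3$ contribute three further linearly independent directions, the constant vector fields of $\L$ span $\R^6\times\{0_6\}$. Remark~\ref{total} then yields the Lie-bracket generating property at every point, completing the proof. The only delicate point is the very first step: one has to notice that the simple double bracket $[[X_0,X_1],X_1]$ already produces a nonzero constant vector field with vanishing angular block, which is guaranteed precisely by the assumption $b_1\not\in\spann(e_1)$; everything else is a routine propagation argument.
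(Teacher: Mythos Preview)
Your proof is correct and follows essentially the same route as the paper's: both compute $[[X_0,X_1],X_1]$ to obtain the constant vector field $(e_1\times b_1,0_3,0_6)$, then bracket again against the $X_k$'s to fill out $\R^3\times\{0_9\}$, add the span of $X_1,X_2,X_3$ to reach $\R^6\times\{0_6\}$, and conclude via Remark~\ref{total}. The only cosmetic difference is that you spell out why $c,\,e_1\times c,\,e_2\times c,\,e_3\times c$ span $\R^3$, whereas the paper simply asserts it.
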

\prf\ 
%
Applying \r{CVF} in the case $J=\Id$ we obtain that 
for every $v_1,v_2\in\R^3$ the constant vector field $V={(v_1,v_2,0_6)}$ satisfies
\be\label{croc}
  {[[X_0,X_i],V]}={(e_i\times v_1-b_i\times v_2,0_9)}.   
\ee
In particular,  taking $i=1$ and $V=X_1$ in \r{croc}, we deduce that  
$Z={(e_1\times b_1,0_9)}$ belongs to $\L$. 
Applying again \r{croc} to $V=Z$ we get that, for $i=1,2,3$,
$Z_i={( e_i\times(e_1\times b_1),0_9)}$ belongs to $\L$. Since 
$Z$, $Z_1$, $Z_2$, and $Z_3$ span $\R^3\times\{0_9\}$, it follows that $\L$
contains 
$$\spann(X_1,X_2,X_3)+\R^3\times\{0_9\}=\R^6\times\{0_6\}.$$ 
As noticed in Remark~\ref{total} this proves the lemma.
\EOP


The controllability of system~\r{vz} follows from Proposition~\ref{Th1}: indeed, $\tilde A_{11}=\rho_1\Id$ is symmetric and invertible and hypothesis \r{rank} implies that $B_1=B_1 B_2^{-1}$  has a smaller set of eigenvectors than $J=\Id$.
Theorem~\ref{main} 
is therefore proven, because of Proposition~\ref{yup},   
under the stronger assumption that \r{rank} holds true.

\subsection{The proportional case}
In order to complete the proof of Theorem~\ref{main}, we have to prove that system \r{vz}--\r{R} is 
controllable when
\be\label{g_rank}
\rk B_2=3,\ \ \ \ \ B_1=\lb B_2,\ \ \ \ \ \lb\ne0.
\ee

Using the same normalization argument as in the previous section, we can assume that
$m=3$ and $B_2=\Id$. Moreover, we can take $\lb=1$ by performing the change of coordinates
$\xi'=\xi/\lb,\om'=\om,\zeta'=\zeta/\lb,R'=R$. Therefore, without loss of generality, 
$X_i={(e_i,e_i,\0_6)}$ for $i=1,2,3$.


Let us prove the Lie-bracket generating condition. 
We have
\[
[X_0,X_i] =(-\rho_1e_i,-\rho_2e_i,Re_i,RS(e_i))
\]
and 
\[
[[X_0,X_i],[X_0,X_j]] = (\0_6,2R(e_j\times e_i),RS(e_j\times e_i)).
\]
A further computation yields
\[
[[[X_0,X_i],[X_0,X_j]],[X_0,X_k]] =(\0_6,3R(e_k\times (e_j\times e_i)),
RS(e_k\times (e_j\times e_i))).
\]

Therefore, the Lie algebra $\L$ contains the following vector fields
\[
\begin{pmatrix}
e_i\\
e_i\\
\0_3\\
\0_3\\
\end{pmatrix}, \hspace{5em}
\begin{pmatrix}
-\rho_{1} e_j\\
-\rho_2e_j\\
Re_j\\
RS(e_j)
\end{pmatrix}, \hspace{5em}
\begin{pmatrix}
\0_3\\
\0_3\\
2Re_k\\
RS(e_k)
\end{pmatrix}, \hspace{5em}
\begin{pmatrix}
\0_3\\
\0_3\\
3Re_l\\
RS(e_l)
\end{pmatrix},
\]
for $i,j,k,l=1,2,3$.
As a consequence, $\L$ contains also 
\[
\begin{pmatrix}
(\rho_2-\rho_1)e_i\\
\0_3\\
Re_i\\
RS(e_i)\\
\end{pmatrix}, \hspace{5em}
\begin{pmatrix}
\0_3\\
(\rho_1-\rho_2)e_j\\
Re_j\\
RS(e_j)
\end{pmatrix}, \hspace{5em}
\begin{pmatrix}
\0_3\\
\0_3\\
Re_k\\
\0_3
\end{pmatrix}, \hspace{5em}
\begin{pmatrix}
\0_3\\
\0_3\\
\0_3\\
RS(e_l)
\end{pmatrix},
\]
for $i,j,k,l=1,2,3$. These vector fields form a moving frame on the manifold $\R^9\times\mathrm{SO}(3)$ (since $\rho_1\neq\rho_2$).

In order to conclude the proof of Theorem~\ref{main}
we need to show that \r{vz} is controllable.
Applying Lemma~\ref{propcontr3} this turns out to be equivalent to the controllability 
 of the following system, 
\begin{equation}
\label{syst17}
\dot x  =-\rho_1 x - v\times x + (\rho_2-\rho_1)v,\ \ \ \ x\in\R^3,
\end{equation}
with the control $v$ taking values in $\R^3$.
Decomposing the control as  $v=w x+W$ with $w\in\R$ and $W$ orthogonal to $x$,   equation~\r{syst17} rewrites as
\[
\dot x  =(-\rho_1+(\rho_2-\rho_1)w) x - W\times x.
\]
Clearly, choosing $W=\0_3$ allows to move as desired along $\spann(x)$, while the choice $w=\rho_1/(\rho_2-\rho_1)$ permits to attain the sphere of radius $\|x\|$ centered at the origin. 
The controllability of \r{vz} is thus proven and the proof of Theorem~\ref{main} completed.

\section{The case where the densities of
the microscopic organism and of the fluid have the same order of magnitude}\label{sme}

As pointed out in \cite[Section 2]{nancy},
the assumption
that the densities of
the microscopic organism and of the fluid have the same order of magnitude
leads to stronger simplifications
than in the general case studied in the previous sections. 
In particular, according 
to the remarks of page 6 the control  
system 
(2.18)--(2.27)
presented in \cite{nancy}
reduces in this case to
\br 
 -\Delta v+\nabla p=0&&\mbox{ on }F\times(0,T),\label{g1}\\
 \mathrm{div} v=0&& \mbox{ on }F\times(0,T),\\
 v(y,t)=\xi(t)+\om(t)\times y+\sum_{i=1}^m u_i(t)\psi_i(y)&& \mbox{ for }(y,t)\in\partial \Om\times(0,T),\\
\lim_{|y|\to\infty}v(y,t)=0&& \mbox{ for }t\in(0,T),\\
\int_{\partial \Om} \sigma(v,p)n\, ds=0&& \mbox{ on }(0,T),\label{int1}\\ 
\int_{\partial \Om}y\times \sigma(v,p)n\, ds=0&& \mbox{ on }(0,T),\label{int2}\\ 
\dot \zeta=R\xi&& \mbox{ on }(0,T),\\ 
\dot R=R S(\om)&& \mbox{ on }(0,T).
\label{g2}
\er
Physically, 
$v$ represents the field of velocities of 
the fluid in $F$ (the coordinates are attached to the body), 
$p$ is the pressure, $\xi$ and $\om$ are the linear and angular velocities of the organism and finally $\zeta$ and $R$  give its position and orientation with respect to a fixed frame. Recall that 
$\psi_1,\dots,\psi_m$ are the functions characterizing the control actions and that
$\sigma$ is the Cauchy stress  defined in \r{sigma}.

Given a  time-interval $[0,T]$ and a control $u\in L^\infty([0,T],\R^m)$
there exists a unique solution $(v,p,\xi,\om,\zeta,R)$ of \r{g1}--\r{g2} 
satisfying 
\brs
&v\in H^1([0, T], L^s (F)\cap D^{ 1,r}(F )\cap D^{2,\theta}(F )\cap \CC^\infty(F )),&\\
&\sup_{y\in F}(1+\|y\|)\|v(y,t)\|<\infty\ \ \ \mbox{for almost every }t\in[0,T],&\\
&p\in H^1([0, T], L^r (F)\cap D^{ 1,\theta}(F )\cap \CC^\infty(F )),&\\
&\xi\in H^1([0, T],\R^3 ),\ \omega\in H^1([0, T],\R^3),\ \zeta\in\CC^1([0, T ],\R^3),\ R\in\CC^1([0, T ], \SO(3)), &
\ers 
for $s\in(3,\infty]$, $r\in (3/2,\infty]$ and $\th\in(1,\infty)$
(see \cite{galdi,nancy}).

The 
finite-dimensional reduction of system \r{g1}--\r{g2}
can be obtained following the procedure proposed in
\cite{nancy}. 
To this extent,  as in 
Section~\ref{dynamics}, define the fundamental solutions $(h^{(i)},p^{(i)})$
and $(H^{(i)},P^{(i)})$ of the Stokes system and associate to them 
the matrices $\mau^k,\mad^k,\lau,\lad$.
Let $L$ be the $n\times m$ matrix 
$$L=-\lp\ba{cc}\mau^1&\mau^2\\ \mad^1&\mad^2\ea\rp^{-1}\lp\ba{c}\lau\\ \lad\ea\rp.$$

\begin{lemma}
For almost every $t\in (0,T)$
we have
$$\lp\ba{c} \xi(t)\\ \om(t)\ea\rp=L u(t).$$
\end{lemma}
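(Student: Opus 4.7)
The plan is to solve the Stokes boundary value problem pointwise in time using linearity and then read off the lemma from the two zero-force/zero-torque constraints \eqref{int1}--\eqref{int2}.

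First, I would fix $t\in(0,T)$ in a set of full measure on which $u(t),\xi(t),\omega(t)$ are defined. The subsystem \eqref{g1}--(2.21) is, at each such $t$, a standard exterior Stokes problem with boundary data
\[
v(\cdot,t)\big|_{\partial\Om}=\xi(t)+\om(t)\times y+\sum_{j=1}^m u_j(t)\psi_j(y),
\qquad \lim_{|y|\to\infty}v(y,t)=0,
\]
whose solution in the class considered in \cite{nancy} is unique. Let $(w^{(j)},q^{(j)})$ denote the unique solution of the Stokes system with boundary datum $\psi_j$. By linearity,
\[
v(y,t)=\sum_{i=1}^{3}\xi_i(t)\,h^{(i)}(y)+\sum_{i=1}^{3}\om_i(t)\,H^{(i)}(y)+\sum_{j=1}^{m}u_j(t)\,w^{(j)}(y),
\]
with an analogous decomposition for $p$.

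Next I would substitute this decomposition into the constraints \eqref{int1}--\eqref{int2}. The contributions of the $\xi$- and $\om$-terms to $\int_{\partial\Om}\sigma(v,p)n\,ds$ and $\int_{\partial\Om}y\times\sigma(v,p)n\,ds$ are, by the very definitions of $\mau^k$ and $\mad^k$, given (entry by entry) by the matrices $-\mau^1,-\mau^2,-\mad^1,-\mad^2$. The $u$-term contribution requires one extra ingredient, namely the Lorentz reciprocity identity for two Stokes solutions $(v_1,p_1)$, $(v_2,p_2)$ vanishing at infinity,
\[
\int_{\partial\Om}v_1\cdot\sigma(v_2,p_2)n\,ds=\int_{\partial\Om}v_2\cdot\sigma(v_1,p_1)n\,ds,
\]
applied with $(v_1,p_1)=(h^{(i)},p^{(i)})$ (so $v_1|_{\partial\Om}=e_i$) and $(v_2,p_2)=(w^{(j)},q^{(j)})$ (so $v_2|_{\partial\Om}=\psi_j$). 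Taking the $i$-th component yields
\[
\Bigl(\int_{\partial\Om}\sigma(w^{(j)},q^{(j)})n\,ds\Bigr)_i=\int_{\partial\Om}\psi_j\cdot g^{(i)}\,ds=-\lau_{ij},
\]
and, using $(H^{(i)},P^{(i)})$ instead, the analogous identity with $\lad_{ij}$ for the torque integral.

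Combining these identifications with the symmetry relations of the resistance matrix recalled from \cite{low} just before \eqref{e_A} (in particular $\mad^1=\T{(\mau^2)}$ together with the self-adjointness of the blocks $\mau^1$ and $\mad^2$), the two vector equations \eqref{int1}--\eqref{int2} become, pointwise in $t$,
\[
\begin{pmatrix}\mau^1 & \mau^2\\ \mad^1 & \mad^2\end{pmatrix}\begin{pmatrix}\xi(t)\\ \om(t)\end{pmatrix}+\begin{pmatrix}\lau\\ \lad\end{pmatrix}u(t)=\0_6.
\]
Since the $6\times 6$ resistance matrix appearing above is invertible (its invertibility is exactly what makes $L$ well-defined, and it follows from its sign-definiteness, cf.\ the last sentence before \eqref{e_B}), solving for $(\xi(t),\om(t))$ gives $(\xi(t),\om(t))^{\T{}}=L\,u(t)$ as required.

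The only delicate step is the bookkeeping of transposes in the application of reciprocity together with the block symmetries of the resistance tensor; once these are handled correctly the lemma reduces to a linear algebra identity. All other steps are a routine consequence of the linearity and unique solvability of the exterior Stokes problem.
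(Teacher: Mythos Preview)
Your argument is correct, but the paper's proof is shorter and avoids the transpose bookkeeping you flag at the end. Instead of decomposing $v$ into fundamental solutions and evaluating the force and torque integrals term by term, the paper invokes the reciprocity identity \cite[Lemma~3.4]{nancy} once, applied directly to the pair $(h^{(i)},p^{(i)})$ and the full solution $(v,p)$: this yields
\[
\int_{\partial\Om} g^{(i)}\cdot v\,ds=\Bigl[\int_{\partial\Om}\sigma(v,p)n\,ds\Bigr]_i=0,
\]
and similarly with $G^{(i)}$. Since the boundary value of $v$ is known explicitly, the left-hand side becomes $\int_{\partial\Om} g^{(i)}\cdot(\xi+\om\times y+\sum_j u_j\psi_j)\,ds$, and each summand is \emph{exactly} one of the defining integrals of $\mau^1,\mau^2,\lau$; no transpositions arise and no appeal to the self-adjointness of the resistance blocks is needed. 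Your route, by contrast, computes $\int\sigma(v,p)n$ from the decomposition of $v$, which produces $(\T{\mau^1})\xi$ and $(\T{\mad^1})\om$ and then requires the symmetry relations to reconcile with the definition of $L$. Both methods are essentially the same reciprocity argument; the paper's ordering just puts the unknown solution on the ``boundary-data'' side of the pairing, which lines up the indices automatically and spares you the introduction of the auxiliary solutions $(w^{(j)},q^{(j)})$.
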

\proof
According to \cite[Lemma 3.4]{nancy} 
we have 
\brs
\int_{\partial \Om} g^{(i)}\cdot v\,ds\ =\ \left[\int_{\partial \Om} \sigma(v,p)n \,ds\right]_i&=&0\\
\int_{\partial \Om} G^{(i)}\cdot  v\,ds\ =\ \left[\int_{\partial \Om} y\times \sigma(v,p)n \,ds\right]_i&=&0
\ers
where the equalities in the right-hand sides follow from 
\r{int1} and \r{int2}.

On the other hand, for almost every $t\in (0,T)$, 
$$
\int_{\partial \Om} g^{(i)}\cdot v\,ds=\int_{\partial \Om} g^{(i)}\cdot (\xi+\om\times y+\sum_{j=1}^m u_j \psi_j)\,ds=-(\mau^1 \xi+\mau^2 \om+\lau u)_i,
$$
and, similarly
$$
\int_{\partial \Om} G^{(i)}\cdot v\,ds=-(\mad^1 \xi+\mad^2 \om+\lad u)_i.
$$
Therefore,
$$
\lp\ba{c}\lau\\ \lad\ea\rp u=-\lp\ba{cc}\mau^1&\mau^2\\ \mad^1&\mad^2\ea\rp \lp\ba{c}\xi\\ \om \ea\rp
$$
which proves the lemma.
\EOP

\medskip

System \r{g1}--\r{g2} therefore reduces to the control system
\be
\label{syst-eq-dens}
\left\{
\ba{rl}
\dot{\zeta} & =R L^1 u,\\
\dot{R} & =RS(L^2 u),
\ea
\right.\ \ \ \ \ \ \ \ \ u\in \R^m,
\ee
where   $L^1,L^2\in \M_{3\times m}$ are such that
$$L=\lp\ba{c}L^1\\ L^2\ea\rp.$$

By homogeneity, since the set of admissible controls is the whole $\R^m$,
the 
controllability in arbitrary small time
of system \eqref{syst-eq-dens} 
is equivalent to the 
controllability of the control system whose admissible velocities
are given by the family of vector fields 
$\mathcal{F}=\{\pm X_{i}\mid 1\le i\le m\}$
defined by 
\be\label{Xi}
X_i(\zeta,R)={(R\: b_{i},R\: S(c_{i}))}
\ee 
where
the $b_{i}$'s 
denote the columns of $L^1$ and 
the $c_{i}$'s those of $L^2$. 
Moreover, since $\mathcal{F}$ is symmetrical,
the 
controllability of this family is equivalent to the fact
that the Lie algebra generated by $\mathcal{F}$, denoted by $\L$, 
is of rank 6 at every point
of $\R^{3}\times\mathrm{SO}(3)$ (see, eg, \cite[Corollary 5.11]{book2}).

\begin{thm}\label{same}
The control system \eqref{syst-eq-dens} is 
controllable in arbitrary small time if and
only if one of the following conditions is satisfied:
\begin{enumerate}
\item $\rank(L^2)\geq 2$ and $\T{(L^1)}L^2+\T{(L^2)}L^1\neq0_{m\times m}$; 
\item $\rank(L^2)=2$ and $\rank(L)\geq 3$.
\end{enumerate}
\end{thm}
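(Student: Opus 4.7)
My approach is to recognize that the vector fields \eqref{Xi} are left-invariant vector fields on the Lie group $SE(3)=\R^3\rtimes SO(3)$, so that $\L$ has everywhere constant rank equal to $\dim\mathfrak{h}$, where $\mathfrak{h}$ is the Lie subalgebra of $\mathfrak{se}(3)\cong\R^3\oplus\mathfrak{so}(3)$ generated by the subspace $\mathcal{V}:=\imm L$, identified with a subspace of $\mathfrak{se}(3)$ via $(b_i,c_i)\mapsto$ $i$-th column. With bracket $[(b,c),(b',c')]=(c\times b'-c'\times b,\,c\times c')$, the task becomes the purely algebraic one of characterizing when $\mathfrak{h}=\mathfrak{se}(3)$.

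First I would analyze the projection $\pi:\mathfrak{se}(3)\to\mathfrak{so}(3)$ onto the second factor. The Lie subalgebras of $\mathfrak{so}(3)$ being $\{0\}$, lines and $\mathfrak{so}(3)$ itself, one has $\pi(\mathfrak{h})=\mathfrak{so}(3)$ if and only if $\rank L^2\ge 2$; in particular $\rank L^2\le 1$ forces $\dim\mathfrak{h}\le 4$, excluding controllability. Assuming $\rank L^2\ge 2$, set $V:=\mathfrak{h}\cap(\R^3\times\{0\})$. This subspace is an ideal of $\mathfrak{h}$ on which $\mathfrak{h}/V\simeq\mathfrak{so}(3)$ acts by the standard irreducible representation, so $V$ equals $\{0\}$ or $\R^3\times\{0\}$. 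Consequently $\mathfrak{h}=\mathfrak{se}(3)$ if and only if $V\neq\{0\}$, and the task is to identify when $V=\{0\}$.

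This is where the main new tool enters: the ad-invariant symmetric bilinear form $\beta((b,c),(b',c')):=b\cdot c'+b'\cdot c$ on $\mathfrak{se}(3)$, whose ad-invariance follows from a short calculation using $u\cdot(v\times w)=v\cdot(w\times u)$. The core lemma I would prove is: under $\pi(\mathfrak{h})=\mathfrak{so}(3)$, $V=\{0\}$ if and only if $\mathfrak{h}=\mathfrak{h}_v:=\{(v\times c,c):c\in\R^3\}$ for some $v\in\R^3$, and every such $\mathfrak{h}_v$ is totally isotropic for $\beta$. Indeed $V=\{0\}$ makes $\pi|_\mathfrak{h}$ a bijection, so $\mathfrak{h}$ is the graph of some $T:\R^3\to\R^3$; imposing the bracket-closure identity $c\times Tc'-c'\times Tc=T(c\times c')$ on the canonical basis forces $T+\T{T}=0$, that is $T=S(v)$, and the identity $(v\times c)\cdot c'+(v\times c')\cdot c=v\cdot(c\times c')+v\cdot(c'\times c)=0$ verifies isotropy.

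The lemma gives at once the sufficiency of condition 1, since $\T{(L^1)}L^2+\T{(L^2)}L^1\neq 0$ is exactly $\beta|_\mathcal{V}\neq 0$, which implies $\beta|_\mathfrak{h}\neq 0$ and hence $\mathfrak{h}\neq\mathfrak{h}_v$ for every $v$. Sufficiency of condition 2 is more elementary: when $\rank L>\rank L^2$, the kernel of $\pi|_\mathcal{V}$ is non-trivial, so $\mathcal{V}\cap(\R^3\times\{0\})\subseteq V$ is non-zero. For the converse, assume $\mathfrak{h}=\mathfrak{se}(3)$, $\rank L^2\ge 2$, and that condition 2 fails, so that $\rank L=\rank L^2\in\{2,3\}$ and $\pi|_\mathcal{V}$ is injective, realizing $\mathcal{V}$ as the graph of some $T$ on $\pi(\mathcal{V})$; if moreover $\beta|_\mathcal{V}=0$ then $T$ extends to an $S(v)$ on $\R^3$ (automatically when $\dim\pi(\mathcal{V})=3$ from $T+\T{T}=0$, and by explicitly solving $v\times w=Tw$ on the plane when $\dim\pi(\mathcal{V})=2$, which reduces to inverting a $3\times 2$ system with antisymmetric off-diagonal data), so $\mathcal{V}\subseteq\mathfrak{h}_v$ and therefore $\mathfrak{h}\subseteq\mathfrak{h}_v\neq\mathfrak{se}(3)$, a contradiction; hence condition 1 holds. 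The main obstacle I expect is the algebraic classification of 3-dimensional Lie subalgebras of $\mathfrak{se}(3)$ surjecting onto $\mathfrak{so}(3)$ via the Klein form $\beta$; once this lemma is in hand, the case analysis on $(\rank L,\rank L^2)$ is clean.
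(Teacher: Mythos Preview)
Your proof is correct and takes a genuinely different, more structural route than the paper's. The paper proceeds by direct computation of iterated Lie brackets: for the ``if'' direction it exhibits, under either hypothesis, two vector fields $Y_1,Y_2\in\L$ of the form $(R\beta_i,RS(\gamma_i))$ with $\gamma_1,\gamma_2$ independent and $\langle\beta_1,\gamma_1\rangle\neq 0$, then explicitly computes $[Y_1,Y_2]$, $[[Y_1,Y_2],Y_1]$, and further brackets to produce six pointwise independent vector fields; for the ``only if'' direction it treats the cases $r\le 1$, $r=2$, $r=3$ separately, each time checking by hand that the brackets close up in a subalgebra of dimension at most three. Your approach instead recognizes the problem as the classification of Lie subalgebras of $\mathfrak{se}(3)$ surjecting onto $\mathfrak{so}(3)$, and brings in the ad-invariant Klein form $\beta$ as the decisive invariant. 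This is more conceptual: it explains \emph{why} the symmetric matrix $\T{(L^1)}L^2+\T{(L^2)}L^1$ is the right object (it is the Gram matrix of $\beta$ on the generators), and it replaces the bracket chasing by the clean dichotomy $V=\{0\}$ versus $V=\R^3\times\{0\}$ coming from the irreducibility of the $\mathfrak{so}(3)$-action. The paper's computation, on the other hand, has the virtue of being entirely self-contained and of producing explicit spanning brackets. One small imprecision in your write-up: when $\rank L^2=3$, the failure of condition~2 alone does not force $\rank L=\rank L^2$; rather, it is the isotropy $\beta|_{\mathcal V}=0$ (failure of condition~1) that rules out a nonzero $(b,0)\in\mathcal V$, since such an element would pair nontrivially with some $(b',c')\in\mathcal V$ once the $c'$ span $\R^3$. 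With that adjustment your argument is complete.
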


\begin{proof} 
First of all notice that the Lie bracket between two vector fields 
of the form $Z_i(\zeta,R)={(R\: v_{i},R\: S(w_{i}))}$, $i=1,2$,
 is given by 
\be\label{bra}
[Z_1,Z_2](\zeta,R)=(R(w_{1}\times v_{2}-w_{2}\times v_{1}),R\, S(w_{1}\times w_{2})).
\ee

Let us prove the {}``if'' part of the theorem. Suppose that 
we can find in $\L$
two vector fields $Y_1,Y_2$ of the form
$$Y_{i}(\zeta,R)={(R\: \beta_{i},R\: S(\gamma_{i}))},\ \ \ \ i=1,2,$$ 
such that 
the vectors $\gamma_{1}$ and $\gamma_{2}$ 
are linearly independent and 
that
the inner product $\langle \beta_{1},\gamma_{1}\rangle$ is nonzero. 
We prove here below that, in this case, the Lie algebra $\L$
is of full rank at every point of $\R^{3}\times\mathrm{SO }(3)$.
Then we show that both $1$. and $2$. guarantee that, without loss of generality, 
such $Y_1$ and $Y_2$ can be found.

Let $Y_3=[Y_1,Y_2]$ whose expression can be obtained using \r{bra}.
Define the vector field $W_1$ as the bracket between $Y_3$ and $Y_1$. Applying again
\r{bra}  
we obtain that
 \[
W_1(\zeta,R)=[Y_3,Y_1](\zeta,R)=\begin{pmatrix}
R\left(2\langle \beta_{1},\gamma_{1}\rangle \gamma_{2}-(\langle \beta_{1},\gamma_{2}\rangle +\langle \beta_{2},\gamma_{1}\rangle )\gamma_{1}-\langle \gamma_{1},\gamma_{2}\rangle \beta_{1}+\| \gamma_{1}\|^{2}\beta_{2}\right)\\
R\, S(-\langle \gamma_{1},\gamma_{2}\rangle \gamma_{1}+\| \gamma_{1}\|^{2}\gamma_{2})
\end{pmatrix}.
\]
Let $W_2$ be the vector field obtained adding $\left\langle \gamma_{1},\gamma_{2}\right\rangle Y_{1}-\| \gamma_{1}\|^{2}Y_{2}$
to $W_1$, ie, 
\[
W_2(\zeta,R)=\begin{pmatrix}
R \left(2\langle \beta_{1},\gamma_{1}\rangle \gamma_{2}-\left(\langle \beta_{1},\gamma_{2}\rangle +\langle \beta_{2},\gamma_{1}\rangle \right)\gamma_{1} \right)\\
\0_3
\end{pmatrix}.
\]
Since $W_2$ is in $\L$, then the vector field
$Y_4=[W_2,Y_{1}]$ belongs to $\L$ as well and we have
that
$Y_4(\zeta,R)={\left(R(2\langle \beta_{1},\gamma_{1}\rangle \gamma_{1}\times \gamma_{2}),\0_3\right)}$. 
Consider now the vector fields 
$Y_5=[Y_{4},Y_{1}]$ and $Y_6=[Y_4,Y_{2}]$, both belonging to 
 $\L$, and whose expressions are
  $Y_5(\zeta,R)={\left(R(2\langle \beta_{1},\gamma_{1}\rangle \gamma_{1}\times(\gamma_{1}\times \gamma_{2})),\0_3\right)}$
and 
$Y_6(\zeta,R)={\left(R(2\left\langle \beta_{1},\gamma_{1}\right\rangle \gamma_{2}\times(\gamma_{1}\times \gamma_{2})),\0_3\right)}$. 
Since each matrix $R$ in $SO(3)$ is invertible, and because $\la \beta_1,\gamma_1\ra\ne0$, 
the vector fields $Y_1,\dots,Y_6$ span $T_{(\zeta,R)}\R^3\times SO(3)$ at every $(\zeta,R)\in \R^3\times SO(3)$ if and only if the matrix 
\[
\Delta=
\begin{pmatrix}
 \beta_{1}  &  \beta_{2}  &  \gamma_2\times \beta_1-\gamma_1\times \beta_2  &  \gamma_{1}\times \gamma_{2}  &  \gamma_{1}\times(\gamma_{1}\times \gamma_{2})  &  \gamma_{2}\times(\gamma_{1}\times \gamma_{2})\\
\gamma_{1}  &  \gamma_{2}  &  \gamma_{2}\times c_{1}  &  \0_3  & \0_3  &  \0_3
\end{pmatrix}
\]
is full-rank. 
The non-degeneracy of $\Delta$ follows easily 
from the assumption that the vectors $\gamma_{1}$ and $\gamma_{2}$ are linearly independent.

We shall prove now that the hypotheses $1$. and $2$. 
imply
the existence of two vector fields $Y_{1}$ and $Y_{2}$ 
as above. 

Let $r=\rank(L^2)$. 
Notice that for every matrix $\Gamma\in\GL(m)$ 
the 
reparameterization $u\to \Gamma u$ 
transforms the matrix $(L^1,L^2)$ into $(L^1\Gamma,L^2\Gamma)$.
The condition 
$\T{(L^1)}L^2+\T{(L^2)}L^1\neq0_{m\times m}$
is preserved by this transformation,
since, $\T{(L^1\Gamma)}L^2\Gamma+\T{(L^2\Gamma)}L^1\Gamma=\Gamma^T(\T{(L^1)}L^2+\T{(L^2)}L^1)\Gamma$.
Thus, without loss of generality,
we can assume that 
$L^2=(L^{2,1}\,|\0_{3\times(m-r)})$ where $L^{2,1}\in\M_{3\times r}$ is of rank $r$. 
Let us write $L^1$ as $(L^{1,1}\,|\,L^{1,2})$ where 
$L^{1,1}\in\M_{3\times r}$ and $L^{1,2}\in\M_{3\times (m-r)}$.

Suppose that $1$. holds.
If $\T{(L^{1,1})}L^{2,1}+\T{(L^{2,1})}L^{1,1}\neq\0_{r\times r}$, then we can find $1\leq j_1,j_2\leq r$ 
such that $\langle b_{j_1},c_{j_1}\rangle \neq0$, 
or $\langle b_{j_2},c_{j_2}\rangle \neq0$, or $ \langle b_{j_1},c_{j_2} \rangle + \langle b_{j_2},c_{j_1} \rangle \neq0$.
If $ \langle b_{j_1},c_{j_1} \rangle =\langle b_{j_2},c_{j_2} \rangle =0$, then the inner
product $ \langle b_{j_1}+b_{j_2},c_{j_1}+c_{j_2} \rangle = \langle b_{j_1},c_{j_2} \rangle + \langle b_{j_2},c_{j_1} \rangle $
is nonzero, and thus the vector fields $Y_1=X_{j_1}+X_{j_2}$ and $Y_2=X_{j_2}$ fulfill the required conditions. Otherwise,  $\T{(L^{2,1})}L^{1,2}\ne \0_{r\times (m-r)}$ and we can fix 
$1\leq j_1,j_2\leq r$ and $j_3>r$ such that $j_1\ne j_2$, $ \langle b_{j_1},c_{j_1} \rangle =0$
and $\langle b_{j_3},c_{j_1} \rangle\ne 0$. Then  $Y_1=X_{j_1}+X_{j_3}$ and $Y_2=X_{j_2}$ fulfill the required conditions.

Assume now that $2$. holds and $1$. does not. 
Then $c_{1}$ and $c_{2}$ are linearly independent and  
 $$ \langle b_{1},c_{1} \rangle= \langle b_{2},c_{2} \rangle= \langle b_{1},c_{2} \rangle + \langle b_{2},c_{1} \rangle =0.$$
Moreover, there exists $j>r=2$ such that $b_j\ne \0_3$ and $\langle b_{j},c_{1} \rangle= \langle b_{j},c_{2} \rangle=0$. Therefore,  there exists a real number $\alpha\neq0$
such that $b_{3}=\alpha(c_{1}\times c_{2})$. Then, according to \r{bra}, 
 $$[X_{1},X_{j}](\zeta,R)={\left(R(\alpha\, c_{1}\times(c_{1}\times c_{2})),\0_3\right)}.$$
As $\left\langle c_{2},c_{1}\times(c_{1}\times c_{2})\right\rangle \neq0$
the vector fields $Y_1=X_{1}$ and $Y_2=X_{2}+[X_{1},X_{j}]$ 
fulfill the required conditions. 

Let us prove now the ``only if'' part of the statement. 
First notice that if $r\le1$ then there
exists a vector $x_{0}\in\R^{3}\smallsetminus\{\0_3\}$ such that $L^2 u\times x_{0}=0$
for every control $u$. 
Multiplying equation
\eqref{syst-eq-dens} by $x_{0}$, we get that $\dot{R}x_{0}=0$ and so $R(t) x_{0}=R(0) x_{0}$ for every time $t$. Therefore, system \eqref{syst-eq-dens} is not controllable.
  
Assume now that $r=\rank(L)=2$ and $\T{(L^1)}L^2+\T{(L^2)}L^1=\0_{m\times m}$.
Let $Y=[X_1,X_2]$, whose expression, according to \r{bra}, is given by
$$Y(\zeta,R)={(R(c_{1}\times b_{2}-c_{2}\times b_{1}),{R\, S(c}_{1}\times c_{2}))}.$$
Since $\langle b_{i},c_{i} \rangle =0$ for $i=1,2$ and
$ \langle b_{1},c_{2} \rangle + \langle b_{2},c_{1} \rangle =0$,
we obtain that 
\brs
[Y,X_{1}]&=&- \langle c_{1},c_{2} \rangle X_{1}+\| c_{1}\|^{2}X_{2}\\
{[Y,X_{2}]}&=&-\| c_{2}\|^{2}X_{1}+ \langle c_{1},c_{2} \rangle X_{2}. 
\ers
Therefore, 
the Lie algebra $\L$, which is generated by $X_1$ and $X_2$,  is equal to
the linear space of vector fields spanned by $X_{1}$, $X_{2}$ and $Y$ and cannot be of full rank. 

Let now $r=3$ and 
$\T{(L^1)}L^2+\T{(L^2)}L^1=\0_{m\times m}$.
The condition $\T{(L^{2,1})}L^{1,2}= \0_{3\times (m-3)}$ implies that the columns of $L^{1,2}$ are orthogonal to all the elements of a basis of $\R^3$. Therefore, $L^{1,2}=\0_{3\times (m-3)}$. 
As for the columns of $L^{1,1}$, we easily obtain from $\T{(L^{1,1})}L^{2,1}+\T{(L^{2,1})}L^{1,1}=\0_{3\times 3}$
 that  
\begin{align*}
b_{1}= & -\alpha_{1 2}\,c_{2}-\alpha_{1 3}\,c_{3},\\
b_{2}= & \alpha_{1 2}\,c_{1}-\alpha_{2 3}\,c_{3},\\
b_{3}= & \alpha_{1 3}\,c_{1}+\alpha_{2 3}\,c_{2},
\end{align*}
for some $\al_{12},\al_{13},\al_{23}\in\R$. 
Without loss of generality we can assume that $(c_1,c_2,c_3)$ is a positively oriented orthonormal basis and an easy computation gives
 \begin{align*}
[X_{1},X_{2}]= & X_{3},\\
{}[X_{1},X_{3}]= & -X_{2},\\
{}[X_{2},X_{3}]= & X_{1},
\end{align*}
 which proves that $\L$ is equal to the 
linear space of vector fields spanned by 
$X_{1},$ $X_2$ and $X_3$ and cannot be of full rank.
\end{proof}


\bibliographystyle{plain}
\bibliography{biblio}

\end{document}